\newcommand{\cov}{\mathrm{cov}}
\newcommand{\Cov}{\mathrm{Cov}}
\newcommand{\e}{\varepsilon}
\newcommand{\w}{\omega}
\newcommand{\diam}{\mathrm{diam}}
\newcommand{\Ra}{\Rightarrow}
\newcommand{\IR}{\mathbb R}
\newcommand{\IZ}{\mathbb Z}
\newcommand{\IN}{\mathbb N}
\newcommand{\upa}{\uparrow}
\newcommand{\da}{\downarrow}
\newcommand{\lev}{\mathrm{lev}}
\newcommand{\Lev}{\mathrm{Lev}}
\newcommand{\id}{\mathrm{id}}
\newcommand{\suc}{\mathrm{pred}}
\newcommand{\Deg}{\mathrm{Deg}}
\newcommand{\vx}{\mathrm{vx}}
\newtheorem{theorem}{Theorem}[section]
\newtheorem{proposition}[theorem]{Proposition}
\newtheorem{lemma}[theorem]{Lemma}
\newtheorem{claim}[theorem]{Claim}
\newtheorem{remark}[theorem]{Remark}
\title[Classifying homogeneous ultrametric spaces...]{Classifying homogeneous ultrametric \\ spaces up to coarse equivalence}
\author[T. Banakh and D. Repov\v s]{Taras Banakh and Du\v san Repov\v s}
\address{T.Banakh: Ivan Franko National University, Lviv, Ukraine
\&
 Jan Kochanowski University, Kielce, Poland}
\email{t.o.banakh@gmail.com}
\address{D.Repov\v s: Faculty of Education, and
Faculty of Mathematics and Physics,
University of Ljubljana
\& Institute of Mathematics, Physics and Mechanics, 
Ljubljana, Slovenia}
\email{dusan.repovs@guest.arnes.si}
\subjclass[2010]{54E35; 51F99}
\keywords{Ultrametric space, isometrically homogeneous metric space, coarse equivalence}
\begin{document}
\begin{abstract} For every metric space $X$ we introduce two cardinal characteristics $\cov^\flat(X)$ and $\cov^\sharp(X)$ describing the capacity of balls in $X$. We prove that these cardinal characteristics are invariant under coarse equivalence and prove that two ultrametric spaces $X,Y$ are coarsely equivalent if $\cov^\flat(X)=\cov^\sharp(X)=\cov^\flat(Y)=\cov^\sharp(Y)$. This result implies  that an ultrametric space $X$ is coarsely equivalent to an isometrically homogeneous ultrametric space if and only if $\cov^\flat(X)=\cov^\sharp(X)$. Moreover, two isometrically homogeneous ultrametric spaces $X,Y$ are coarsely equivalent if and only if $\cov^\sharp(X)=\cov^\sharp(Y)$ if and only if each of these spaces coarsely embeds into the other space. This means that the coarse structure of an isometrically homogeneous ultrametric space $X$ is completely determined by the value of the cardinal $\cov^\sharp(X)=\cov^\flat(X)$.
\end{abstract}
\maketitle

\section{Introduction and Main Results}

In this paper we present a criterion for recognizing coarsely equivalent ultrametric spaces and apply this to classify isometrically homogeneous ultrametric spaces up to coarse equivalence.
Let us recall that an {\em ultrametric space} is a metric space $(X,d)$ whose metric satisfies the strong triangle inequality: $d(x,z)\le\max\{d(x,y),d(y,z)\}$ for all points $x,y,z\in X$.
A metric space $(X,d)$ is called {\em isometrically homogeneous} if for any points $x,y\in X$ there is an isometric bijection $f:X\to X$ such that $f(x)=y$. A typical example of an isometrically homogeneous metric space is any group $G$ endowed with a left-invariant metric $d$.

We shall be interested in classifying isometrically homogeneous ultrametric spaces up to coarse equivalence. A map $f:X\to Y$ between two metric spaces $X,Y$ is called {\em coarse} if for any $\e\in\IR_+$ there is $\delta\in\IR_+$ such that for any subset $A\subset X$ of diameter $\diam\,A\le\e$ its image $f(A)$ has diameter $\diam\,f(A)\le\delta$. Here by $\IR_+$ we denote the half-line $(0,\infty)$. For a subset $A$ of a metric space $(X,d_X)$ its {\em diameter} is defined as expected: $\diam A=\sup_{x,y\in A}d_X(x,y)$.

A bijective map $f:X\to Y$ between two metric spaces is called a {\em coarse isomorphism} if both maps $f$ and $f^{-1}$ are coarse. In this case the metric spaces $X,Y$ are called {\em coarsely isomorphic}. Two metric spaces $X,Y$ are called {\em coarsely equivalent} if they contain coarsely isomorphic large subspaces $L_X\subset X$ and $L_Y\subset Y$. A subset $L\subset X$ of a metric space $(X,d_X)$ is called {\em large} if $X=\bigcup_{x\in L}B_\e(x)$ where $B_\e(x)=\{y\in X:d_X(x,y)\le \e\}$ stands for the closed $\e$-ball centered at $x$. It follows that each metric space $X$ is coarsely equivalent to any large subset in $X$. For example the real line $\IR$ is coarsely equivalent to the space of integers $\IZ$.

Properties of metric spaces preserved by coarse equivalences are studied in Coarse (or else Asymptotic)  Geometry \cite{BS}--\cite{Roe}.
In this paper we shall classify isometrically homogeneous ultrametric spaces up to coarse equivalence thus extending the classification of separable isometrically homogeneous ultrametric spaces given in  \cite{Zar}. According to \cite{Zar}, each isometrically homogeneous separable ultrametric space is coarsely equivalent to one of three spaces: the singleton $1$, the Cantor macro-cube $2^{<\IN}$ or the Baire macro-space $\w^{<\IN}$.

In this paper we shall prove that the coarse structure of an isometrically homogeneous ultrametric space $X$ is fully determined by the value of two (coinciding) cardinal invariants $\cov^\flat(X)$ and $\cov^\sharp(X)$, which are defined for any metric space $X$ as follows.

For a point $x\in X$ of a metric space $X$ and two numbers $\e,\delta\in\IR_+$ let
$$\cov_\e^\delta(x)=\min\{|C|:C\subset X,\;B_\delta(x)\subset \bigcup_{c\in C}B_\e(x)\}$$ be the smallest number of closed $\e$-balls covering the closed $\delta$-ball centered at $x$.

For a metric space $X$ let
\begin{itemize}
\item $\cov^\sharp(X)$ be the smallest cardinal $\kappa$ for which there is $\e\in\IR_+$ such that for every $\delta\in\IR_+$ we get $\sup_{x\in X}\cov_\e^\delta(x)<\kappa$;
\item $\cov^\flat(X)$ be the largest cardinal $\kappa$ such that for every cardinal $\lambda<\kappa$ and real number $\e\in\IR_+$ there is $\delta\in\IR_+$ such that $\min_{x\in X}\cov_\e^\delta(x)\ge\lambda$.
\end{itemize}
It follows that $\cov^\flat(X)\le\cov^\sharp(X)$ and the cardinals $\cov^\flat(X)$ and $\cov^\sharp(X)$ can be equivalently defined as
$$\cov^\sharp(X)=\min_{\e\in\IR_+}\sup_{\delta\in\IR_+}\big(\sup_{x\in X}\cov_\e^\delta(x)\big)^+$$
 and
 $$\cov^\flat(X)=\min_{\e\in\IR_+}\sup_{\delta\in\IR_+}\big(\min_{x\in X}\cov_\e^\delta(x)\big)^+,$$
where $\kappa^+$ denotes the smallest cardinal which is larger than $\kappa$.
Cardinals are identified with the smallest ordinals of a given cardinality.

The following proposition on coarse invariance of the cardinal characteristics $\cov^\flat$ and $\cov^\sharp$ will be proved in Section~\ref{s3}.

\begin{proposition}\label{p1} If metric space $X$ and $Y$ are coarsely equivalent, then  $$\cov^\flat(X)=\cov^\flat(Y)\mbox{ \ and \ }\cov^\sharp(X)=\cov^\sharp(Y).$$
\end{proposition}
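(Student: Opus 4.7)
The strategy is to split the proof along the definition of coarse equivalence. Coarsely equivalent $X$ and $Y$ come equipped with large subsets $L_X\subset X$, $L_Y\subset Y$ and a coarse isomorphism $f:L_X\to L_Y$. I will prove (i) that passing to a large subset preserves both cardinals, and (ii) that coarse isomorphisms preserve them. Chaining the resulting equalities through $L_X$ and $L_Y$ yields $\cov^\flat(X)=\cov^\flat(L_X)=\cov^\flat(L_Y)=\cov^\flat(Y)$ and the same for $\cov^\sharp$.

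For step (ii), let $f:X\to Y$ be a coarse isomorphism. Coarseness of $f$ and of $f^{-1}$ supplies nondecreasing moduli $\w_+,\w_-:\IR_+\to\IR_+$ with $d_Y(f(x_1),f(x_2))\le\w_+(d_X(x_1,x_2))$ and $d_X(f^{-1}(y_1),f^{-1}(y_2))\le\w_-(d_Y(y_1,y_2))$. The heart of the argument is the transfer inequality
$$\cov_{\w_+(\e_X)}^{\delta_Y}(f(x))\le\cov_{\e_X}^{\w_-(\delta_Y)}(x),$$
proved by lifting any $y'\in B_{\delta_Y}(f(x))$ to $x'=f^{-1}(y')\in B_{\w_-(\delta_Y)}(x)$: if $x'\in B_{\e_X}(c)$ for some center $c$ of a given cover, then $y'\in B_{\w_+(\e_X)}(f(c))$. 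From this, $\cov^\sharp(Y)\le\cov^\sharp(X)$ follows at once by taking $\e_X$ to witness $\cov^\sharp(X)<\kappa$ and setting $\e_Y:=\w_+(\e_X)$, with $\delta_X:=\w_-(\delta_Y)$ for any given $\delta_Y$. The dual inequality applied to $f^{-1}$ gives $\cov^\sharp(X)\le\cov^\sharp(Y)$. For $\cov^\flat$ the same transfer is used in the other direction: given $\lambda<\cov^\flat(X)$ and $\e_Y>0$, one picks $\delta_X$ with $\min_{x\in X}\cov_{\w_-(\e_Y)}^{\delta_X}(x)\ge\lambda$ and sets $\delta_Y:=\w_+(\delta_X)$; applying the transfer inequality to $f^{-1}$ then forces $\cov_{\e_Y}^{\delta_Y}(y)\ge\lambda$ for every $y\in Y$.

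For step (i), let $L\subset X$ be large with constant $r$. For $x\in L$, every cover of $B_\delta(x)\subset X$ by $\e$-balls yields a cover of $B_\delta(x)\cap L$ by $(\e+r)$-balls centered in $L$, by shifting each center to an $L$-point within distance $r$. Conversely, every cover of $B_{\delta+r}(x)\cap L$ by $\e$-balls in $L$ extends to a cover of $B_\delta(x)$ in $X$ by $(\e+r)$-balls, since every point of $B_\delta(x)$ has an $L$-point within distance $r$ lying in $B_{\delta+r}(x)\cap L$. Because the definitions of $\cov^\flat$ and $\cov^\sharp$ take a $\min_\e$ and a $\sup_\delta$ and are therefore insensitive to such bounded additive modifications of the radii, one concludes $\cov^\flat(L)=\cov^\flat(X)$ and $\cov^\sharp(L)=\cov^\sharp(X)$. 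The main obstacle I expect is the bookkeeping for $\cov^\flat$: one must verify that $\delta_Y$ in the transfer step depends only on $\lambda$ and $\e_Y$ and is uniform in $y\in Y$, and that all radius-inflation constants from steps (i) and (ii) are correctly absorbed when one passes to the $\min_\e\sup_\delta$ form of the cardinals.
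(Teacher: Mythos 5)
Your proof is correct and takes a genuinely different route from the paper. The paper works through the multi-map characterization of coarse equivalence (Proposition~\ref{p2.1}): a single coarse multi-map $\Phi\colon X\multimap Y$ encapsulates both the passage to a large subset and the bijection, and one transfers covers of balls through $\Phi$ and $\Phi^{-1}$ in one stroke, proving the two lemmas in Section~\ref{s3}. You instead work directly with the definition (a coarse bijection between large subsets) and split the argument into (i) invariance under passage to a large subset, absorbed by additive radius inflation by the largeness constant $r$, and (ii) invariance under a coarse bijection $f$, handled by your transfer inequality through the moduli $\w_\pm$. Both routes are sound; the paper's is slicker and its Lemma~\ref{l3.1} proves something slightly stronger, namely monotonicity of $\cov^\sharp$ under coarse embeddings rather than mere invariance under coarse equivalence, a fact the paper reuses later. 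Your step (ii) is fully correct as stated: the transfer inequality and its application to both $\cov^\sharp$ and $\cov^\flat$ check out, and the surjectivity of $f$ is exactly what makes the $\sup_y$ and $\min_y$ bounds uniform.

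One detail you should make explicit in step (i): the outer $\sup$ and $\min$ in the definitions of $\cov^\sharp(X)$ and $\cov^\flat(X)$ range over all $x\in X$, not only over $x\in L$. Your two cover-transfer observations compare $\cov_\cdot^\cdot$ at points $x\in L$ only. To handle a point $x\in X\setminus L$ you need the additional step of choosing $x'\in L$ with $d(x,x')\le r$ and using the inclusions $B_{\delta-r}(x')\subset B_\delta(x)\subset B_{\delta+r}(x')$ to compare $\cov_\e^\delta(x)$ with $\cov_\e^{\delta\pm r}(x')$, before applying your transfer between $X$-balls and $L$-balls. This costs only another additive $r$ and so is still of the bounded-additive kind that the $\min_\e\sup_\delta$ form absorbs, but as you phrase it the argument only compares $L$-centered quantities and does not by itself bridge $\sup_{x\in X}$ (resp. $\min_{x\in X}$) and $\sup_{x\in L}$ (resp. $\min_{x\in L}$). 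You flag this class of concern yourself under ``bookkeeping,'' and it does work out, so this is a gap of exposition rather than of idea.
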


Observe that the inequality $\cov^\sharp(X)\le\w$ means that $X$ has bounded geometry while $\cov^\flat(X)\ge\w$ means that $X$ has no isolated balls (see \cite{Zar} for definitions). By \cite{BZ}, any two ultrametric spaces of bounded geometry and without isolated balls are coarsely equivalent.

The following criterion of coarse equivalence of ultrametric spaces generalizes this fact and is one of the principal results of this paper.

\begin{theorem}\label{main} Let $X,Y$ be two ultrametric spaces.
\begin{enumerate}
\item If $\cov^\sharp(X)\le\cov^\flat(Y)$, then $X$ is coarsely equivalent to a subspace of $Y$.
\item If $\cov^\flat(X)=\cov^\sharp(X)=\cov^\sharp(Y)=\cov^\flat(Y)$, then $X$ and $Y$ are coarsely equivalent.
\end{enumerate}
\end{theorem}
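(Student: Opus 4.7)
Both parts rest on the forest structure that the ultrametric inequality imposes on $X$ and $Y$: for any increasing sequence of scales $r_0<r_1<\cdots$ in $\IR_+$ with $r_n\to\infty$, the equivalence relation $d_X\le r_n$ partitions $X$ into $r_n$-balls, and these partitions refine as $n$ decreases, yielding a rooted forest whose level-$n$ vertices are the $r_n$-balls and whose edges join each $r_n$-ball to the unique $r_{n+1}$-ball containing it. The plan is, for carefully chosen scales, to construct a level- and parent-preserving map $\phi$ between the forests of $X$ and $Y$---an injection in~(1), a bijection in~(2)---and then to define the coarse map $f$ by picking a representative point in each $r_0$-ball of $X$ and sending it to a representative of the $\phi$-image $s_0$-ball in $Y$.

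\textbf{Proof of (1).} Set $\kappa=\cov^\sharp(X)\le\cov^\flat(Y)$. First I would choose $r_0\in\IR_+$ witnessing $\cov^\sharp(X)$, so that $\sup_{x\in X}\cov_{r_0}^\delta(x)<\kappa$ for every $\delta\in\IR_+$, extend to $r_n\to\infty$, and set $\mu_n=\sup_x\cov_{r_{n-1}}^{r_n}(x)<\kappa$. Since $\kappa\le\cov^\flat(Y)$, inductively pick $s_0<s_1<\cdots$ with $\min_{y\in Y}\cov_{s_{n-1}}^{s_n}(y)\ge\mu_n$ for each $n\ge 1$, so that every $s_n$-ball of $Y$ contains at least $\mu_n$ many $s_{n-1}$-balls. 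Fix basepoints $x_0\in X$ and $y_0\in Y$, send the trunk $B_{r_n}(x_0)\mapsto B_{s_n}(y_0)$ for every $n$, and extend $\phi$ to off-trunk balls by induction on the entry level $m(B)=\min\{m:B\subset B_{r_m}(x_0)\}$: at each entry level $m$, inject the off-trunk $r_{m-1}$-balls inside $B_{r_m}(x_0)$ (of total cardinality $\le\mu_m$) into the off-trunk $s_{m-1}$-balls inside $B_{s_m}(y_0)$ (of cardinality $\ge\mu_m-1$), and having fixed these images, descend each resulting subtree level by level using the same cardinality inequality at every lower scale. The resulting $\phi$ is level- and parent-preserving and injective at each level, because distinct balls with distinct parents have images in distinct ancestors while distinct children of a common parent are assigned by construction to distinct children of the image parent. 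Picking a representative $x_B\in B$ in each $r_0$-ball $B$ of $X$ to form a large subset $L_X\subset X$ and defining $f(x_B)$ to be any point of $\phi(B)$, the tree-morphism property combined with ultrametricity yields the equivalence $d_X(x_B,x_{B'})\le r_n\Leftrightarrow d_Y(f(x_B),f(x_{B'}))\le s_n$ for every $n$; as $r_n,s_n\to\infty$, $f$ is a coarse isomorphism of $L_X$ onto $f(L_X)\subset Y$, exhibiting $X$ as coarsely equivalent to the subspace $f(L_X)$ of $Y$.

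\textbf{Proof of (2) and main obstacle.} Under the symmetric hypothesis of~(2), part~(1) applies in either direction and supplies coarse embeddings both ways; to upgrade to coarse equivalence, one further needs $f(L_X)$ to be large in $Y$, equivalently $\phi$ to be cofinal in the $Y$-forest. I would build $\phi$ as a bijection by a back-and-forth variant of the above construction: exploiting $\cov^\flat=\cov^\sharp=\kappa$ on both sides, choose the scales $r_n$ and $s_n$ alternately so that at every $n$ all four quantities $\sup_x\cov_{r_{n-1}}^{r_n}(x)$, $\min_x\cov_{r_{n-1}}^{r_n}(x)$, $\sup_y\cov_{s_{n-1}}^{s_n}(y)$, $\min_y\cov_{s_{n-1}}^{s_n}(y)$ lie between two prescribed cardinals strictly below $\kappa$, and then construct $\phi$ in alternating stages, at each stage matching either the next unmatched off-trunk $X$-ball to a $Y$-ball or the next unmatched off-trunk $Y$-ball to an $X$-ball, always respecting parents. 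The symmetric branching bounds guarantee that each step succeeds, yielding the desired level- and parent-preserving bijection, and the induced $f$ is then a coarse isomorphism between large subsets of $X$ and of $Y$. The central difficulty throughout is the inductive construction of $\phi$ itself, above all in the unbounded case where the forest has no root---which forces the induction to be anchored on the trunk of a fixed basepoint and to proceed outward by entry level---and in~(2) the further task of scheduling the back-and-forth so that the symmetric cardinal hypothesis actually prevents any step from being obstructed. Once $\phi$ is in hand, the verifications of injectivity, parent-preservation, and the coarseness of $f$ and $f^{-1}$ are straightforward consequences of the tree-morphism property together with the laminar structure of ultrametric balls.
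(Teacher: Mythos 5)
Your proof of part (1) follows essentially the paper's route: pick a scale $r_0$ witnessing $\cov^\sharp(X)$, then inductively choose scales $s_n$ for $Y$ so that the branching of the canonical forest of $X$ at each level is dominated by the branching of the canonical forest of $Y$, and build a monotone level-preserving injection between the forests; this is exactly the tower embedding lemma of Banakh--Zarichnyi invoked in the paper, and passing to boundaries gives the coarse embedding. That part is fine.

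Part (2), however, has a genuine gap. You propose to produce a level-preserving, \emph{parent-preserving bijection} between the two forests by a back-and-forth scheduling. But such a bijection need not exist under the hypotheses. A parent-preserving bijection forces, for every matched pair of vertices, a bijection between their children, hence an \emph{equality} of the cardinalities of children-sets level by level. The hypothesis $\cov^\flat(X)=\cov^\sharp(X)=\cov^\flat(Y)=\cov^\sharp(Y)=\kappa$ only yields an \emph{interleaving} of branching cardinals strictly below $\kappa$, not equality. Concretely, if $\kappa=\aleph_\omega$ one can have the canonical tower of $X$ branching by $\aleph_1,\aleph_3,\aleph_5,\dots$ at successive levels and that of $Y$ branching by $\aleph_2,\aleph_4,\aleph_6,\dots$: the interleaving condition holds, both spaces have $\cov^\flat=\cov^\sharp=\aleph_\omega$, yet at no level do the children-cardinalities on the two sides agree, so no level-preserving, parent-preserving bijection can exist, and no rescheduling of the back-and-forth, nor any reselection of scales, can repair this. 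This is precisely the difficulty the paper is built to avoid: instead of a bijection it constructs a \emph{surjective tower immersion} (a monotone level-preserving surjection that is only ``almost injective'', allowing distinct siblings to share an image), and then uses the fact that a surjective tower immersion still induces a coarse equivalence of boundaries. The recursive counting that makes the surjection possible is exactly the content of the paper's main combinatorial lemma and uses cardinal arithmetic $\mu\cdot\mu=\mu$ for infinite $\mu$. Your observation that coarse embeddings in both directions do not automatically upgrade to a coarse equivalence is correct and is the right obstacle to identify; but the fix is to replace ``bijection'' by ``surjective immersion'', not to insist on a bijection. You also omit the separate treatment of the cases $\kappa=0,1,\omega$, though those are routine.
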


Theorem~\ref{main} will be proved in Section~\ref{s5} after some preparatory work in Section~\ref{s4}.
Now we shall present some applications of this theorem.

The first one is the characterization of ultrametric spaces $X$ with $\cov^\flat(X)=\cov^\sharp(Y)$.

\begin{theorem}\label{iso} An ultrametric space $X$ is coarsely equivalent to an isometrically homogeneous ultrametric space if and only if $\cov^\flat(X)=\cov^\sharp(X)$.
\end{theorem}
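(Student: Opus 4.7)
The direction ``only if'' is immediate from Proposition~\ref{p1}: if $H$ is isometrically homogeneous then every isometry of $H$ maps balls to balls of the same radius, so $\cov_\e^\delta(h)$ is independent of $h\in H$, whence $\sup_{h\in H}\cov_\e^\delta(h)=\min_{h\in H}\cov_\e^\delta(h)$ for every $\e,\delta\in\IR_+$. Comparing the equivalent formulations of $\cov^\flat$ and $\cov^\sharp$ given in the introduction gives $\cov^\flat(H)=\cov^\sharp(H)$, and a coarse equivalence $X\sim H$ transfers this equality to $X$ by Proposition~\ref{p1}.

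For the direction ``if'', assume $\cov^\flat(X)=\cov^\sharp(X)=:\kappa$. By Theorem~\ref{main}(2) it is enough to exhibit an isometrically homogeneous ultrametric space $H_\kappa$ with $\cov^\flat(H_\kappa)=\cov^\sharp(H_\kappa)=\kappa$, for then $X$ and $H_\kappa$ are coarsely equivalent. If $\kappa=2$, take $H_\kappa$ to be a point. Otherwise, since $\delta\mapsto\cov_\e^\delta(x)$ is monotone and $\IR_+$ has countable cofinality, the sup in the definition of $\cov^\sharp$ is realized along a countable sequence, so $\kappa$ is either $\aleph_0$, an infinite successor cardinal, or a limit cardinal of countable cofinality. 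Accordingly choose a sequence of cardinals $(c_n)_{n\in\w}$ with $2\le c_n<\kappa$ and $\sup_m\bigl(\prod_{n\le m}c_n\bigr)^+=\kappa$ (finite and unbounded if $\kappa=\aleph_0$; constant equal to $\lambda$ if $\kappa=\lambda^+$; cofinal in $\kappa$ if $\kappa$ is an uncountable limit). Fix sets $\Lambda_n$ of cardinality $c_n$ with basepoints $0_n\in\Lambda_n$, and define
$$H_\kappa=\bigl\{f\in\textstyle\prod_{n\in\w}\Lambda_n:f(n)=0_n\text{ for almost all }n\bigr\},\quad d(f,g)=2^{\max\{n:f(n)\neq g(n)\}}.$$

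Routine verifications show that $d$ is an ultrametric. Isometric homogeneity of $H_\kappa$ is obtained by acting coordinatewise: given $f,g\in H_\kappa$, choose a permutation $\sigma_n$ of $\Lambda_n$ with $\sigma_n(f(n))=g(n)$ for each $n$ and $\sigma_n=\id$ outside the finite set of coordinates where $f$ or $g$ is nonzero; the induced map $h\mapsto(\sigma_n(h(n)))_{n\in\w}$ is then a bijective isometry of $H_\kappa$ sending $f$ to $g$. Using the product structure one computes $\cov_{2^{n_0}}^{2^m}(f)=\prod_{n_0<n\le m}c_n$ for $m>n_0$, whose sup over $m$ has cardinal successor equal to $\kappa$ by our choice of $(c_n)$; hence $\cov^\flat(H_\kappa)=\cov^\sharp(H_\kappa)=\kappa$, and Theorem~\ref{main}(2) applied to $X$ and $H_\kappa$ yields the desired coarse equivalence. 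The principal difficulty is the case analysis producing a suitable sequence $(c_n)$ for every admissible $\kappa$; the monotonicity of $\cov_\e^\delta(x)$ in $\delta$ together with the countable cofinality of $\IR_+$ guarantees that limit cardinals of uncountable cofinality never arise as $\cov^\sharp$ of an ultrametric space, so the countably-indexed tree construction is always sufficient.
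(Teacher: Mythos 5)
Your proof is correct and takes essentially the same route as the paper: prove the ``only if'' direction via the invariance from Proposition~\ref{p1}, observe that the admissible values of $\kappa$ are restricted (degenerate, successor, or limit of countable cofinality), build a restricted-product-style isometrically homogeneous ultrametric space realizing that $\kappa$, and invoke Theorem~\ref{main}(2). The paper phrases the model space as a direct union of groups $G=\bigcup_n G_n$ with the left-invariant ultrametric $d_G(x,y)=\min\{n:x^{-1}y\in G_n\}$, which is the same object as your $H_\kappa$ (take $\Lambda_n=G_n/G_{n-1}$); one small point worth tightening is that your stated reason for excluding finite $\kappa\ge3$ (countable cofinality of $\IR_+$) does not by itself rule out finite successor cardinals — the exclusion actually uses the hypothesis $\cov^\flat=\cov^\sharp$ together with disjointness of same-radius balls in an ultrametric space.
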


\begin{proof} If an ultrametric space $X$ is isometrically homogeneous, then for any points $x,y\in X$ and numbers $\e,\delta\in\IR_+$ we get $\cov_\e^\delta(x)=\cov_\e^\delta(y)$, which implies that $\min_{x\in X}\cov_\e^\delta(x)=\sup_{x\in X}\cov_\e^\delta(y)$ and hence $\cov^\flat(X)=\cov^\sharp(X)$.

If an ultrametric space $X$ is coarsely equivalent to an isometrically homogeneous metric space $Y$, then the invariance of the cardinal characteristics $\cov^\flat$ and $\cov^\sharp$  under coarse equivalences implies that $\cov^\flat(X)=\cov^\flat(Y)=\cov^\sharp(Y)=\cov^\sharp(X)$. This completes the proof of the ``only if''part of the theorem.

To prove the ``if'' part, assume that $X$ is an ultrametric space with $\kappa=\cov^\flat(X)=\cov^\sharp(X)$. The definition of $\kappa=\cov^\flat(X)=\cov^\sharp(X)$ implies that either $\kappa=0$ or $\kappa=1$ or $\kappa$ has countable cofinality or $\kappa$ is a successor cardinal.

If $\kappa=0$, then the space $X$ is empty and hence isometrically homogeneous.

If $\kappa=1$, then the ultrametric space $X$ is bounded and is coarsely equivalent to the singleton (which is an isometrically homogeneous ultrametric space).

If $\kappa$ has countable cofinality or is a successor cardinal, then we can choose a non-decreasing sequence of non-zero cardinals $(\kappa_n)_{n\in\IN}$ such that $\kappa=\sup_{n\in\IN}\kappa_n^+$.
Choose an increasing sequence of groups $\{e\}=G_0\subset G_1\subset G_2\subset\dots$ such that $|G_{n}/G_{n-1}|=\kappa_n$ and on the union $G=\bigcup_{n\in\w}G_n$ consider the left-invariant ultrametric
$$d_G(x,y)=\min\{n\in\w:x^{-1}y\in G_n\}$$turning $G$ into an isometrically homogeneous
 ultrametric space $(G,d_G)$.

Observe that $$\cov^\flat(G,d_G)=\cov^\sharp(G,d_G)=\min_{n\in\IN}\sup_{m\ge n}|G_m/G_n|^+=\sup_{m\in\w}\kappa_m^+=\kappa.$$
Applying Theorem~\ref{main}, we conclude that the ultrametric space $X$ is coarsely equivalent to the isometrically homogeneous ultrametric space $(G,d_G)$.
\end{proof}

Theorem~\ref{main} and Propositions~\ref{p1} imply the following classification of isometrically homogeneous ultrametric spaces.

\begin{theorem} For isometrically homogeneous ultrametric spaces $X,Y$ the following conditions are equivalent:
\begin{enumerate}
\item $X$ and $Y$ are coarsely equivalent;
\item $X$ is coarsely equivalent to a subspace of $Y$ and $Y$ is coarsely equivalent to a subspace of $X$;
\item $\cov^\sharp(X)=\cov^\sharp(Y)$.
\end{enumerate}
\end{theorem}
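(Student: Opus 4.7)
The plan is to prove the cycle of implications (1) $\Ra$ (2) $\Ra$ (3) $\Ra$ (1); two of the three steps are short and (2) $\Ra$ (3) is where the work lies. For (1) $\Ra$ (2) I would unpack the definition: since $X$ and $Y$ are coarsely equivalent, they contain large coarsely isomorphic subspaces $L_X \subset X$ and $L_Y \subset Y$; then $X$ is coarsely equivalent to its large subspace $L_X$, and hence to $L_Y \subset Y$, so $L_Y$ witnesses (2) for $X$, and symmetrically $L_X$ witnesses (2) for $Y$. For (3) $\Ra$ (1) I would recycle the opening observation of the proof of Theorem~\ref{iso}: isometric homogeneity forces $\cov^\flat = \cov^\sharp$ for each of $X$ and $Y$, so the hypothesis $\cov^\sharp(X) = \cov^\sharp(Y)$ upgrades to $\cov^\flat(X) = \cov^\sharp(X) = \cov^\sharp(Y) = \cov^\flat(Y)$, which is exactly the hypothesis of Theorem~\ref{main}(2); the latter then delivers the coarse equivalence of $X$ and $Y$.

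The content lies in (2) $\Ra$ (3). Let $A \subset Y$ be a subspace coarsely equivalent to $X$ and $B \subset X$ a subspace coarsely equivalent to $Y$; Proposition~\ref{p1} gives $\cov^\sharp(A) = \cov^\sharp(X)$ and $\cov^\sharp(B) = \cov^\sharp(Y)$. I would then establish the monotonicity lemma: for any subspace $A$ of an ultrametric space $Y$, $\cov^\sharp(A) \le \cov^\sharp(Y)$. The argument uses the defining ultrametric property that every point of a closed $\e$-ball is a center of that ball: given a covering of $B_\delta(x) \subset Y$ by $\e$-balls in $Y$, one re-centers each ball that meets $A$ at a chosen point of the intersection, producing a covering of $B_\delta(x) \cap A$ by the same (or fewer) $\e$-balls with centers in $A$. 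Hence $\cov_\e^\delta(x)$ computed inside $A$ is bounded by $\cov_\e^\delta(x)$ computed inside $Y$ for every $x \in A$, which transfers to the inequality $\cov^\sharp(A) \le \cov^\sharp(Y)$. Applying this monotonicity to both inclusions $A \subset Y$ and $B \subset X$ gives $\cov^\sharp(X) = \cov^\sharp(A) \le \cov^\sharp(Y) = \cov^\sharp(B) \le \cov^\sharp(X)$, and equality follows.

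The main obstacle is precisely this subspace monotonicity. In a non-ultrametric space an $\e$-ball of $Y$ meeting $A$ can fail to be an $\e$-ball of $A$ centered at any point of the intersection, so a cover of $B_\delta(x)$ in $Y$ does not necessarily convert into a cover of $B_\delta(x) \cap A$ by the same number of $A$-centered $\e$-balls; indeed, arbitrary subspaces of an isometrically homogeneous metric space need not satisfy any reasonable upper bound on $\cov^\sharp$. The ultrametric identity ``every point of a ball is a center'' is what makes the transfer cost-free and is the only genuinely new ingredient beyond Proposition~\ref{p1} and Theorem~\ref{main}.
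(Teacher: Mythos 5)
Your cycle of implications and each individual step are correct, and the $(1)\Rightarrow(2)$ and $(3)\Rightarrow(1)$ arguments match the paper exactly (in fact $(1)\Rightarrow(2)$ is even simpler than you make it: $Y$ is a subspace of itself, so no large subspaces need be invoked). The substantive discrepancy is in $(2)\Rightarrow(3)$, and it concerns your diagnosis rather than your proof.

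You present subspace monotonicity of $\cov^\sharp$ as a ``genuinely new ingredient'' available only because of the ultrametric re-centering trick, and you assert that for general metric spaces ``arbitrary subspaces of an isometrically homogeneous metric space need not satisfy any reasonable upper bound on $\cov^\sharp$.'' Both claims are wrong, and the needed monotonicity is in fact already Lemma~\ref{l3.1} of the paper, stated for arbitrary metric spaces: if $X$ is coarsely equivalent to a subspace of $Y$ then $\cov^\sharp(X)\le\cov^\sharp(Y)$. Its proof pushes a covering of a $\delta$-ball in $Y$ back through the coarse embedding $\Phi:X\multimap Y$; the only geometric input is that a set of diameter $r$ lies inside $B_r(z)$ for any $z$ in the set, which has nothing to do with the strong triangle inequality. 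Even for the literal inclusion $A\hookrightarrow Y$, no ultrametric is needed: if $B_\e(y)$ meets $A$, pick $z\in B_\e(y)\cap A$ and note $B_\e(y)\cap A\subset B_{2\e}^A(z)$, so one gets $\cov_{2\e}^\delta(x)$ in $A$ bounded by $\cov_\e^\delta(x)$ in $Y$; the factor $2$ is harmless since $\cov^\sharp$ involves a $\min$ over $\e$. Your ultrametric re-centering is a pleasant way to avoid the factor $2$ (and is genuinely cleaner: $z\in B_\e(y)$ forces $B_\e(z)=B_\e(y)$), but it is not the crux and the monotonicity is not a gap to be filled. The paper's own proof simply cites Lemma~\ref{l3.1} twice, once in each direction, for $(2)\Rightarrow(3)$. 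So: the proof you give is valid, but your account of where the difficulty lies, and your claim that the ultrametric hypothesis is essential to the monotonicity lemma, are incorrect.
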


\begin{proof} The implication $(1)\Ra(2)$ is trivial, the implication $(2)\Ra(3)$ follows from Proposition~\ref{p1} and monotonicity of $\cov^\sharp$ under taking subspaces (see Lemma~\ref{l3.1}), and the final implication $(3)\Ra(1)$ follows from Theorems~\ref{main} and \ref{iso}.
\end{proof}

It is well known \cite{BDHM} that a metric space $X$ is coarsely equivalent (even coarsely isomorphic) to an ultrametric space if and only if $X$ has asymptotic dimension $\mathrm{asdim}(X)=0$. So, in fact all our results concern the coarse classification of metric spaces of asymptotic dimension zero.

In conclusion, we briefly describe the structure of the paper.
In Section~\ref{s2} we characterize the coarse equivalences by means of macro-uniform multivalued maps. Section~\ref{s3} contains the proof of Proposition~\ref{p1}. In Section~\ref{s4} we recall the necessary information about towers and their morphisms and in  Section~\ref{s5} we present a proof of our main result (Theorem~\ref{main}).

\section{Characterizing coarse equivalences}\label{s2}

In this section we shall discuss the definition of coarse equivalence based on the notion of a multi-map.
Such approach to defining coarse equivalences was suggested and exploited in \cite{BZ}.

By a {\em multi-map} $\Phi:X\multimap Y$ between two sets $X,Y$ we
understand any subset $\Phi\subset X\times Y$. For a subset $A\subset X$ by $\Phi(A)=\{y\in Y:\exists a\in
A\mbox{ with }(a,y)\in\Phi\}$ we denote the image of $A$ under the
multi-map $\Phi$. Given a point $x\in X$ we write $\Phi(x)$
instead of $\Phi(\{x\})$.

The inverse $\Phi^{-1}:Y\multimap X$ of the multi-map $\Phi$ is the
multi-map $$\Phi^{-1}=\{(y,x)\in Y\times X: (x,y)\in\Phi\}\subset
Y\times X$$ assigning to each point $y\in Y$ the set $\Phi^{-1}(y)=\{x\in X:y\in\Phi(x)\}$. For two multi-maps $\Phi:X\multimap Y$ and $\Psi:Y\multimap Z$ we
define their composition $\Psi\circ\Phi:X\multimap Z$ as usual:
$$\Psi\circ\Phi=\{(x,z)\in X\times Z:\exists y\in Y\mbox{ such that $(x,y)\in \Phi$ and $(y,z)\in\Psi$}\}.$$


A multi-map $\Phi:X\multimap Y$ between metric spaces $X,Y$ is called {\em coarse} if for any $\e\in\IR_+$ there is $\delta\in\IR_+$ such that for any subset $A\subset X$ of diameter $\diam\, A\le\e$ its image $\Phi(A)$ has diameter $\diam\,\Phi(A)\le\delta$. This is equivalent to saying that for every $\e\in\IR_+$ the oscillation $$\w_\Phi(\e)=\sup\{\diam\,\Phi(A):A\subset X,\;\diam(A)\le\e\}$$is finite. Here $\diam\,A=\sup_{x,y\in A}d_X(x,y)$ is the diameter of a subset $A\subset X$ in a metric space $(X,d_X)$. By definition, $\diam\, \emptyset=0$.

A multi-map $\Phi:X\multimap Y$ between metric spaces is called a {\em coarse embedding} if $\Phi^{-1}(Y)=X$ and both multi-maps $\Phi$ and $\Phi^{-1}$ are coarse. If, in addition, $\Phi(X)=Y$, then the multi-map $\Phi$ is called a {\em coarse equivalence} between $X$ and $Y$.

It is clear that for two coarse embeddings (coarse equivalences) $\Phi:X\multimap Y$ and $\Phi:Y\multimap Z$ their composition $\Psi\circ\Phi:X\multimap Z$ is a coarse embedding (coarse equivalence).

The following characterization of coarse equivalence was proved in Proposition 2.1 of \cite{BZ}.

\begin{proposition}\label{p2.1} For metric spaces $X,Y$ the following conditions are equivalent:
\begin{enumerate}
\item $X$ and $Y$ are coarsely equivalent (i.e., contain coarsely isomorphic large subspaces);
\item there is a coarse equivalence $\Phi:X\multimap Y$;
\item there are coarse maps $f:X\to Y$ and $g:Y\to X$ such that\newline $\sup_{x\in X}d_X(x,g\circ f(x))<\infty$ and $\sup_{y\in Y}d_Y(y,f\circ g(x))<\infty$.
\end{enumerate}
\end{proposition}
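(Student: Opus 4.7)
The plan is to prove the cycle $(1)\Ra(2)\Ra(3)\Ra(1)$, so that each implication can reuse the structure produced by the previous one. For $(1)\Ra(2)$, given large subspaces $L_X\subset X$, $L_Y\subset Y$ and a coarse isomorphism $h:L_X\to L_Y$, I would use the largeness of $L_X$ and $L_Y$ to pick approximating maps $a:X\to L_X$ and $b:Y\to L_Y$ with uniformly bounded displacement, and then glue their graphs through $h$ into the multi-map
$$\Phi=\{(x,h(a(x))):x\in X\}\cup\{(h^{-1}(b(y)),y):y\in Y\}\subset X\times Y.$$
The two pieces force $\Phi(X)=Y$ and $\Phi^{-1}(Y)=X$, while coarseness of $\Phi$ and $\Phi^{-1}$ reduces to the oscillations of $h$ and $h^{-1}$ combined with the uniform bounds on $a$ and $b$.

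For $(2)\Ra(3)$, the conditions $\Phi(X)=Y$ and $\Phi^{-1}(Y)=X$ say every fibre of $\Phi$ and of $\Phi^{-1}$ is nonempty, so the axiom of choice provides single-valued selectors $f:X\to Y$ with $f(x)\in\Phi(x)$ and $g:Y\to X$ with $g(y)\in\Phi^{-1}(y)$. The graphs of $f$ and $g$ are contained in $\Phi$ and $\Phi^{-1}$ respectively, so both maps are coarse. The composition bound comes for free by observing that both $x$ and $g(f(x))$ lie in the single fibre $\Phi^{-1}(f(x))$, so $d_X(x,g(f(x)))\le\w_{\Phi^{-1}}(0)<\infty$; the estimate for $f\circ g$ is symmetric.

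For $(3)\Ra(1)$, which is the delicate step, let $C$ be a common bound for $d_X(x,g(f(x)))$ and $d_Y(y,f(g(y)))$. Put $L_Y=f(X)$, which is large in $Y$ since $f(g(y))\in L_Y$ always lies within $C$ of $y$. Using the axiom of choice, fix a section $s:L_Y\to X$ of $f$ and set $L_X=s(L_Y)$; by construction $f|_{L_X}:L_X\to L_Y$ is a bijection with inverse $s$. The set $L_X$ is large in $X$ because, for any $x\in X$ with $y=f(x)\in L_Y$, both $x$ and $s(y)$ lie within $C$ of $g(y)=g(f(x))=g(f(s(y)))$, giving $d_X(x,s(y))\le 2C$. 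Coarseness of $s$ follows from $d_X(s(y),g(y))\le C$ on $L_Y$, which shows that $s$ differs from $g|_{L_Y}$ by a uniformly bounded amount, so $s$ inherits the controlled oscillation of $g$.

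The conceptual obstacle is confined to this last step: manufacturing a genuine bijection between large subspaces from two quasi-inverse coarse maps that need not be injective and whose images need not obviously be large. The section-based construction dispatches both issues simultaneously, by choosing one preimage of each point of $f(X)$ once and for all and then using the composition bound $d_X(x,g(f(x)))\le C$ to keep $L_X$ close to every $x\in X$.
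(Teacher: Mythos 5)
Your proof is correct. Note, however, that the paper does not actually prove Proposition~\ref{p2.1}; it simply cites Proposition~2.1 of \cite{BZ}. So there is no in-paper argument to compare against, and your self-contained proof via the cycle $(1)\Ra(2)\Ra(3)\Ra(1)$ is a welcome addition.

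A few remarks on the details you supplied. In $(1)\Ra(2)$, the two-graph multi-map $\Phi=\{(x,h(a(x))):x\in X\}\cup\{(h^{-1}(b(y)),y):y\in Y\}$ does the job; one should just be careful to verify coarseness of $\Phi$ itself, not only of each piece, since $\Phi(A)$ mixes points $h(a(x))$ with $x\in A$ and points $y$ with $h^{-1}(b(y))\in A$. The cross-term estimate goes through: for such a pair $d_X(a(x),h^{-1}(b(y)))\le C_a+\e$, hence $d_Y(h(a(x)),b(y))\le\w_h(C_a+\e)$ and $d_Y(h(a(x)),y)\le\w_h(C_a+\e)+C_b$. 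Your step $(2)\Ra(3)$ is the slick one: since a selector's graph is a sub-multimap of $\Phi$, coarseness is inherited automatically, and the uniform composition bound $d_X(x,g(f(x)))\le\w_{\Phi^{-1}}(0)$ is exactly the right observation because both $x$ and $g(f(x))$ sit in the fibre $\Phi^{-1}(f(x))$. In $(3)\Ra(1)$ the choice $L_Y=f(X)$, a section $s$ of $f$ over $L_Y$, and $L_X=s(L_Y)$ produces the required coarse isomorphism; the key estimates $d_X(x,s(f(x)))\le 2C$ (largeness of $L_X$) and $d_X(s(y),g(y))\le C$ on $L_Y$ (so $s$ differs from the coarse map $g$ by a bounded amount, hence is coarse) are both correct, and $f|_{L_X}$ is coarse as a restriction of $f$. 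This is the standard way to turn a pair of quasi-inverse coarse maps into an honest bijection between large subspaces, and everything you wrote checks out.
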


\section{Proof of Proposition~\ref{p1}}\label{s3}

Proposition~\ref{p1} follows from two lemmata.

\begin{lemma}\label{l3.1} If a metric space $X$ is coarsely equivalent to a subspace of a metric space $Y$, then $\cov^\sharp(X)\le\cov^\sharp(Y)$.
\end{lemma}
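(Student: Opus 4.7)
The plan is to use Proposition~\ref{p2.1}(3) to realize the coarse equivalence by honest functions and then pull a ball-cover from $Y$ back to $X$ through these functions. First I would take a subspace $Y'\subseteq Y$ coarsely equivalent to $X$ and apply Proposition~\ref{p2.1}(3) to extract coarse maps $f:X\to Y'$ and $g:Y'\to X$ with $C:=\sup_{x\in X}d_X(x,g(f(x)))<\infty$, viewing $f$ as a coarse map $X\to Y$ via the inclusion $Y'\hookrightarrow Y$, so that both oscillations $\w_f,\w_g$ are finite on each argument.

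Set $\kappa:=\cov^\sharp(Y)$ and fix $\e_Y\in\IR_+$ witnessing its definition, so that $\sup_{y\in Y}\cov_{\e_Y}^{\delta_Y}(y)<\kappa$ for every $\delta_Y\in\IR_+$. The key move is to declare
$$\e_X\;:=\;2C+\w_g(2\e_Y),$$
a scale that depends on $\e_Y$ alone and \emph{not} on any $\delta_X$. Given $\delta_X\in\IR_+$ and a center $x_0\in X$, coarseness of $f$ places $f(B^X_{\delta_X}(x_0))$ inside $B^Y_{\delta_Y}(f(x_0))$ for $\delta_Y:=\w_f(2\delta_X)$. Covering the latter by a family $\{B^Y_{\e_Y}(c_i):i\in I\}$ of cardinality $|I|<\kappa$, the pullback sets $A_i:=B^X_{\delta_X}(x_0)\cap f^{-1}(B^Y_{\e_Y}(c_i))$ cover $B^X_{\delta_X}(x_0)$. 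For $x_1,x_2\in A_i$ both $f(x_1),f(x_2)\in Y'$ lie in $B^Y_{\e_Y}(c_i)$, so $d_Y(f(x_1),f(x_2))\le 2\e_Y$ and therefore $d_X(g(f(x_1)),g(f(x_2)))\le\w_g(2\e_Y)$; the triangle inequality through $g(f(x_1))$ and $g(f(x_2))$ then bounds $d_X(x_1,x_2)$ by $C+\w_g(2\e_Y)+C=\e_X$. Hence each nonempty $A_i$ lies in a single closed $\e_X$-ball of $X$, so $\cov^{\delta_X}_{\e_X}(x_0)\le|I|<\kappa$; since this bound is uniform in $x_0$ and holds for every $\delta_X$, we conclude $\cov^\sharp(X)\le\kappa$.

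The main obstacle, as the calculation makes explicit, is the \emph{uniformity} of $\e_X$: the definition of $\cov^\sharp$ demands a single $\e$ that works for all $\delta$, so a naive ``push the cover through $f$'' argument using only $f$ fails because the resulting $X$-pieces would have diameter controlled by $\w_{f^{-1}}$ at scale $\e_Y$ only in a multi-valued sense, and the outer scale would then depend on $\delta_X$. Using $g$ to collapse the $\e_Y$-balls back into $X$ with diameter $\w_g(2\e_Y)$, and paying the near-inverse price $2C$, replaces this failed step by a clean bound that launders all $\delta_X$-dependence into $\delta_Y$ only and leaves $\e_X$ depending solely on $\e_Y$.
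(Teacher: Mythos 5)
Your argument is correct and is essentially the paper's proof translated from the multi-map formulation (Proposition~\ref{p2.1}(2), a coarse embedding $\Phi:X\multimap Y$ with $\Phi^{-1}$ coarse) into the function formulation (Proposition~\ref{p2.1}(3), coarse maps $f,g$ with $g\circ f$ uniformly close to $\id_X$): in both versions the outer scale for $X$ is manufactured from the $\e_Y$-scale on $Y$ alone by applying the coarseness of the backward direction, and the $\delta$-dependence is pushed entirely onto $Y$. Your $\e_X=2C+\w_g(2\e_Y)$ corresponds to the paper's $\e'=\w_{\Phi^{-1}}(2\e)$, with the extra $2C$ paying for replacing the relation $\Phi^{-1}$ by the near-inverse $g$.
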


\begin{proof} Proposition~\ref{p2.1} implies that the space $X$, being coarsely equivalent to a subspace of $Y$, admits a coarse embedding $\Phi:X\multimap Y$. By definition of the cardinal $\cov^\sharp(Y)$, there is a number $\e\in\IR_+$ such that for every $\delta\in\IR_+$ we get $\kappa_\delta:=\sup_{y\in Y}\cov_\e^\delta(y)<\cov^\sharp(Y)$.

Taking into account that the multi-map $\Phi^{-1}:Y\multimap X$ is coarse, we conclude that the number $\e'=\w_{\Phi^{-1}}(2\e)$ is finite. The inequality $\cov^\sharp(X)\le\cov^\sharp(Y)$ will follow as soon as we check that $\sup_{x\in X}\cov_{\e'}^{\delta'}(x)<\cov^\sharp(Y)$ for every $\delta'\in\IR_+$. Given any $\delta'\in\IR_+$ consider the finite number $\delta=\w_{\Phi}(2\delta')$ and observe that for every $x\in X$ the ball $B_{\delta'}(x)$ has diameter $\diam\, B_{\delta'}(x)\le2\delta'$, which implies that $\diam\, \Phi(B_\delta(x))\le\w_{\Phi}(2\delta')=\delta$. Then $\Phi(B_{\delta'}(x))\subset B_\delta(y)$ for some point $y\in Y$. Since $\cov_{\e}^\delta(y)\le\kappa_\delta$, there is a subset $C\subset Y$ of cardinality $|C|\le\kappa_\delta$ such that $ B_{\delta}(y)\subset\bigcup_{c\in C}B_\e(c)$. The inclusion $\Phi(B_{\delta'}(x))\subset B_\delta(y)$ and the equality $X=\Phi^{-1}(Y)$ imply that $$B_{\delta'}(x)\subset \Phi^{-1}\big(\Phi(B_{\delta'}(x))\big)\subset \Phi^{-1}(B_\delta(y))\subset\bigcup_{c\in C}\Phi^{-1}(B_\e(c)).$$ For every $c\in C$ the set $\Phi^{-1}(B_\e(c))\subset X$ has diameter $\le\w_{\Phi^{-1}}(2\e)=\e'$ and hence is contained in the closed $\e'$-ball $B_{\e'}(x_c)$ centered at some point $x_c\in X$. Then $B_{\delta'}(x)\subset\bigcup_{c\in C}B_{\e'}(x_c)$, which implies $\cov_{\e'}^{\delta'}(x)\le|C|\le\kappa_\delta$. Therefore, we obtain the desired inequality $\sup_{x\in X}\cov_{\e'}^{\delta'}(x)\le\kappa_\delta<\cov^\sharp(Y)$.
\end{proof}

\begin{lemma} If  metric spaces $X$ and $Y$ are coarsely equivalent, then  $$\cov^\flat(X)=\cov^\flat(Y).$$
\end{lemma}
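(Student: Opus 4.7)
The plan is to mirror Lemma~\ref{l3.1}, replacing $\sup_x$ with $\min_x$ throughout. By Proposition~\ref{p2.1} the coarse equivalence of $X$ and $Y$ yields a coarse equivalence $\Phi:X\multimap Y$, and by the symmetry in $X$ and $Y$ it suffices to prove $\cov^\flat(X)\le\cov^\flat(Y)$. I would unpack this as follows: fix any cardinal $\lambda<\cov^\flat(X)$ and verify that $\lambda$ satisfies the defining property of $\cov^\flat(Y)$, i.e., for every $\e'\in\IR_+$ exhibit $\delta'\in\IR_+$ with $\min_{y\in Y}\cov_{\e'}^{\delta'}(y)\ge\lambda$.

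The two key scale transfers come from the finiteness of the oscillations of $\Phi$ and $\Phi^{-1}$. Given $\e'\in\IR_+$, set $\e:=\w_{\Phi^{-1}}(2\e')\in\IR_+$; applying $\lambda<\cov^\flat(X)$ at scale $\e$ yields $\delta\in\IR_+$ with $\min_{x\in X}\cov_\e^\delta(x)\ge\lambda$; then set $\delta':=\w_\Phi(2\delta)\in\IR_+$.

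To verify the resulting inequality, I would take any $y\in Y$ and pick $x\in\Phi^{-1}(y)$, which is possible because $\Phi(X)=Y$. Since $y\in\Phi(x)\subseteq\Phi(B_\delta(x))$ and $\diam\,\Phi(B_\delta(x))\le\delta'$, we get $\Phi(B_\delta(x))\subseteq B_{\delta'}(y)$. Any cover $B_{\delta'}(y)\subseteq\bigcup_{c\in C}B_{\e'}(c)$ then pulls back, using $B_\delta(x)\subseteq\Phi^{-1}(\Phi(B_\delta(x)))$, to $B_\delta(x)\subseteq\bigcup_{c\in C}\Phi^{-1}(B_{\e'}(c))$; each nonempty set $\Phi^{-1}(B_{\e'}(c))$ has diameter at most $\e$ and is thus contained in some closed $\e$-ball $B_\e(x_c)$ of $X$. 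Hence $\cov_\e^\delta(x)\le|C|$, and the lower bound $\cov_\e^\delta(x)\ge\lambda$ forces $|C|\ge\lambda$, so $\cov_{\e'}^{\delta'}(y)\ge\lambda$ as required.

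The main obstacle is conceptual rather than computational: because $\cov^\flat$ is built from $\min_x$, one cannot argue with an arbitrary point of $X$ as in Lemma~\ref{l3.1}; one must, for each given $y\in Y$, produce a \emph{specific} $x\in X$ whose ball $B_\delta(x)$ has $\Phi$-image inside $B_{\delta'}(y)$. This is exactly what the surjectivity clause $\Phi(X)=Y$, built into the definition of a coarse equivalence, makes available. Once such an $x$ is selected, the scale bookkeeping via $\w_\Phi$ and $\w_{\Phi^{-1}}$ proceeds exactly as in the proof of Lemma~\ref{l3.1}.
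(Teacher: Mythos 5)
Your proof is correct and matches the paper's argument essentially step for step: both invoke Proposition~\ref{p2.1} and symmetry, transfer scales via $\w_{\Phi^{-1}}$ and $\w_\Phi$, pick a specific $x\in\Phi^{-1}(y)$ (using surjectivity of $\Phi$), and pull back a cover of $B_{\delta'}(y)$ to a cover of $B_\delta(x)$ by $\e$-balls. The only cosmetic difference is that the paper phrases the final step as a proof by contradiction while you argue directly, and the $\e/\e'$ and $\delta/\delta'$ labels are swapped.
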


\begin{proof} By Proposition~\ref{p2.1}, there is a coarse equivalence $\Phi:X\multimap Y$. By symmetry, it suffices to prove that $\cov^\flat(X)\le\cov^\flat(Y)$. This inequality will follow as soon as for every cardinal $\kappa<\cov^\flat(X)$ and every $\e\in\IR_+$ we find $\delta\in\IR_+$ such that $\min_{y\in Y}\cov_{\e}^\delta(y)\ge\kappa$. Given any $\e\in\IR_+$, consider the finite number $\e'=\w_{\Phi^{-1}}(2\e)$ and using the definition of the cardinal $\cov^\flat(X)>\kappa$, find a number $\delta'\in\IR_+$ such that $\min_{x\in X}\cov_{\e'}^{\delta'}(x)\ge\kappa$. We claim that the number $\delta=\w_{\Phi}(2\delta')$ has the required property. Given any point $y\in Y$, we need to check that $\cov_\e^\delta(y)\ge\kappa$. Assuming to the contrary, we could find a set $C\subset Y$ of cardinality $|C|<\kappa$ such that $B_\delta(y)\subset\bigcup_{c\in C}B_\e(c)$. Then for any point $x\in\Phi^{-1}(y)$ we would get $$B_{\delta'}(x)\subset \Phi^{-1}(B_\delta(y))\subset \bigcup_{c\in C}\Phi^{-1}(B_\e(c))\subset \bigcup_{c\in C}B_{\e'}(x_c)$$for any points $x_c\in\Phi^{-1}(c)$, $c\in C$ (see the proof of Lemma~\ref{l3.1}).
This would imply that $\cov_{\e'}^{\delta'}(x)\le|C|<\kappa$, which contradicts the choice of $\delta'$.

So, $\min_{y\in Y}\cov_\e^\delta(y)\ge\kappa$ and hence $\cov^\flat(Y)\ge\cov^\flat(X)$.
\end{proof}

\begin{remark} Easy examples show that the cardinal characteristic $\cov^\flat$ is not monotone with respect to taking subspaces (in contrast to $\cov^\sharp$, which is monotone, according to Lemma~\ref{l3.1}).
\end{remark}

\section{Towers and their morphisms}\label{s4}

Theorem~\ref{main} announced in the introduction will be proved by induction on partially ordered sets called towers. The technique of towers was created in \cite{BZ} for characterization of the Cantor macro-cube. In this section we recall the necessary information on towers.

\subsection{Partially ordered sets} A {\em partially ordered set} is a set $T$ endowed with a reflexive antisymmetric transitive relation $\le$.

A partially ordered set $T$ is called {\em $\upa$-directed}  if for any two points $x,y\in T$ there is a point $z\in T$ such that $z\ge x$ and $z\ge y$.

A subset $C$ of a partially ordered set $T$ is called  {\em cofinal} if for every $x\in T$ there is $y\in C$ such that  $y\ge x$.

By the {\em lower cone} (resp. {\em upper cone}) of a point $x\in T$  we understand  the set ${\downarrow}x=\{y\in T:y\le x\}$ (resp. ${\uparrow}x=\{y\in T:y\ge x\}$). A subset $A\subset T$ will be called a {\em lower} (resp. {\em upper}) {\em set} if ${\downarrow}a\subset A$ (resp. ${\uparrow}a\subset A$) for all $a\in A$.
For two points $x\le y$ of $T$ the intersection $[x,y]={\upa}x\cap {\da}y$ is called the {\em order interval} with end-points $x,y$.

A partially ordered set $T$ is a {\em tree} if $T$ has the smallest element and for each point $x\in T$ the lower cone ${\da}x$ is well-ordered (in the sense that each subset $A\subset{\da}x$ has the smallest element).

\subsection{Defining towers}

A partially ordered set $T$ is called a {\em tower} if
$T$ is $\upa$-directed and for every points $x\le y$ in $T$ the order interval $[x,y]\subset T$ is finite and linearly ordered.

This definition implies that for every point $x$ in a tower $T$ the upper set ${\upa}x$ is linearly ordered and is order isomorphic to a subset of $\w$. Since $T$ is $\upa$-directed, for any points $x,y\in T$ the upper sets ${\upa}x$ and ${\upa}y$ have non-empty intersection and this intersection has the smallest element $x\wedge y=\min({\upa}x\cap{\upa}y)$ (because each order interval in $X$ is finite). Thus any two points $x,y$ in a tower have the smallest upper bound $x\wedge y$.

It follows that for each point $x\in T$ of a tower $T$ the lower cone ${\da}x$ endowed with the reverse partial order is a tree of at most countable height.

\subsection{Levels of  tower} Given two points $x,y\in T$ we write $\lev_T(x)\le\lev_T(y)$ if $$|[x,x\wedge y]|\ge|[y,x\wedge y]|.$$ Also we write $\lev_T(x)=\lev_T(y)$ if  $|[x,x\wedge y]|=|[y,x\wedge y]|$.

The relation $$\{(x,y)\in T\times T:\lev_T(x)=\lev_T(y)\}$$ is an equivalence relation  on $T$ dividing the tower $T$ into equivalence classes called the {\em levels} of $T$. The level containing a point $x\in T$ is denoted by $\lev_T(x)$.
Let $$\Lev(T)=\{\lev_T(x):x\in T\}$$ denote the set of levels of $T$ and let
$$\lev_T:T\to\Lev(T),\;\lev_T:x\mapsto\lev_T(x),$$
stand for the quotient map called the {\em level map}.

The set $\Lev(T)$ of levels of $T$ endowed with the order $\lev_T(x)\le \lev_T(y)$ is a linearly ordered set, order isomorphic to a subset of integers.
For a level $\lambda\in\Lev(T)$ by $\lambda+1$ (resp. $\lambda-1$) we denote the successor (resp. the predecessor) of $\lambda$ in the level set $\Lev(T)$. If $\lambda$ is a maximal (resp. minimal) level of $T$, then we put $\lambda+1=\emptyset$ (resp. $\lambda-1=\emptyset$).

It is clear that each $\upa$-directed subset $S$ of a tower $T$ is a tower with respect to the partial order inherited from $T$. In this case we say that $S$ is a {\em subtower} of $T$. A typical example of a subtower of $T$ is a {\em level subtower}  $$T^L=\{x\in T:\lev_T(x)\in L\},$$ where $L\subset\Lev(T)$ is a cofinal subset of the level set of the tower $T$.

A tower $T$ will be called {\em $\da$-bounded} (resp. {\em $\upa$-bounded}\/) if the level set $\Lev(T)$ has the smallest (resp. largest) element. Otherwise $T$ is called {\em $\da$-unbounded} (resp. {\em $\upa$-unbounded}\/). All towers that will be considered in this paper are assumed to be $\upa$-unbounded and $\da$-bounded.

The level set $\Lev (T)$ of a $\da$-bounded tower can be identified with $\w$, so that zero corresponds to the smallest level of $T$.

\subsection{The boundary of a tower}
By a {\em branch} of a tower $T$ we understand a maximal linearly ordered subset of $T$. The family of all branches of $T$ is denoted by $\partial T$ and is called the {\em boundary} of $T$.
Each branch of a ${\da}$-bounded tower can be identified with its smallest element.
 The boundary $\partial T$ of a ${\da}$-bounded tower carries an ultrametric that can be defined as follows.

Given two branches $x,y\in\partial T$ let
$$\rho(x,y)=\begin{cases}0,&\mbox{if $x=y$,}\\
\lev_T(\min x\cap y),&\mbox{if $x\ne y$.}
\end{cases}
$$
Here we identify the level set $\Lev(T)$ of $T$ with $\w$. It is a standard exercise to check that $\rho$ is a well-defined ultrametric on the boundary $\partial T$ of $T$ turning $\partial T$ into an  ultrametric space.

In the sequel we shall assume that the boundary $\partial T$ of any tower $T$ is endowed with the ultrametric $\rho$.

\subsection{Degrees of points of a tower}

For a point $x\in T$ and a level $\lambda\in\Lev(T)$ let $\suc_\lambda(x)=\lambda\cap{\downarrow}x$ be the set of predecessors of $x$ on the $\lambda$-th level and $\deg_\lambda(x)=|\suc_\lambda(x)|$. For $\lambda=\lev_T(x)-1$,  the set $\suc_{\lambda}(x)$, called the set of {\em parents} of $x$, is denoted by $\suc(x)$. The cardinality $|\suc(x)|$ is called the {\em degree} of $x$ and is denoted by $\deg(x)$. Thus $\deg(x)=\deg_{\lev_T(x)-1}(x)$. It follows that $\deg(x)=0$ if and only if $x$ is a minimal element of $T$.

For levels $\lambda,l\in\Lev(T)$ let
$$\deg_\lambda^l(T)=\min\{\deg_\lambda(x):\lev_T(x)=l\}$$ and$$\Deg_\lambda^l(T)=\sup\{\deg_\lambda(x):\lev_T(x)=l\}.$$

Now let us introduce several notions related to degrees. We define a tower $T$ to be
\begin{itemize}
\item {\em homogeneous} if $\deg_\lambda^l(T)=\Deg_\lambda^l(T)$ for any levels $\lambda\le l$ of $T$;
\item {\em pruned} if $\deg_{\lambda-1}^\lambda(T)>0$ for every non-minimal level $\lambda$ of $T$.
\end{itemize}

It is easy to check that a tower $T$ is pruned if and only if each branch of $T$ meets each level of $T$. In this case the boundary $\partial T$ of a ${\da}$-bounded tower $T$ can be identified with the smallest level of $T$.

There is a direct interdependence between the degrees of points of the tower $T$ and the capacities of the balls in the ultrametric space $\partial T$. For an arbitrary branch $x\in \partial T$ we can see that $\cov_\lambda^l(x)=\deg_\lambda(x\cap \lev_T^{-1}(l))$ for any levels $\lambda\le l\in\Lev(T)=\w$. This implies that $\deg_\lambda^l(T)=\cov_\lambda^l(\partial T)$ and $\Deg_\lambda^l(T)=\Cov_\lambda^l(\partial T)$.


\subsection{Assigning a tower to an ultrametric space} In the preceding section we have assigned to each tower $T$ the ultrametric space $\partial T$. In this section we describe the converse operation assigning to each ultrametric space $X$ a pruned tower $T_X^L$ whose boundary $\partial T_X^L$ is coarsely equivalent to the space $X$.

A closed discrete unbounded subset $L\subset[0,\infty)$ will be called a {\em level set}.
Given an ultrametric space $X$ and a level set $L\subset[0,\infty)$ consider the set $$T^L_X=\{(B_\lambda(x),\lambda):x\in X,\,\lambda\in L\}$$endowed with the partial order $(B_\lambda(x),\lambda)\le (B_l(y),l)$ if $\lambda\le l$ and $B_\lambda(x)\subset B_l(y)$. Here $B_\lambda(x)$ stands for the closed $\lambda$-ball centered at $x\in X$.

The tower $T^L_X$ will be called the {\em canonical $L$-tower} of a metric space $X$.
Observe that for each point $x\in X$ the set $B_L(x)=\{(B_\lambda(x),\lambda):\lambda\in L\}$ is a branch of the tower $T^L_X$, so the map
$$B_L:X\to\partial T^L_X,\;\;B_L:x\mapsto B_L(x),$$ called the {\em canonical map}, is well-defined.

The following important fact was proved in Propositions 4.4 and 4.5 of \cite{BZ}.

\begin{lemma}\label{c5} Let $L\subset[0,\infty)$ be a level set. Then the canonical map $B_L:X\to\partial T_X^L$ of a metric space $X$ into the boundary of its canonical $L$-tower is a coarse equivalence. If the ultrametric space $X$ is isometrically homogeneous, then the tower $T_X^L$ is homogeneous and its boundary $\partial T_X^L$ is isometrically homogeneous.
\end{lemma}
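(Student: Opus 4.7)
The plan breaks the lemma into three pieces: (i) verify that $B_L(x)$ is indeed a branch of $T_X^L$; (ii) obtain an explicit formula for the boundary ultrametric $\rho$ in terms of the original metric $d$ and deduce that $B_L$ is a coarse equivalence; (iii) transfer isometries of $X$ to level-preserving automorphisms of the tower in the homogeneous case.

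Piece (i) is immediate from the ultrametric dichotomy that two balls of the same radius are either equal or disjoint. The chain $B_L(x)=\{(B_\lambda(x),\lambda):\lambda\in L\}$ is linearly ordered since $B_\lambda(x)\subset B_l(x)$ whenever $\lambda\le l$, and any candidate extender $(B_\mu(z),\mu)$ must have $B_\mu(z)$ comparable to $B_\mu(x)$, which by the dichotomy forces $B_\mu(z)=B_\mu(x)$ and so $(B_\mu(z),\mu)\in B_L(x)$.

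For (ii), enumerate $L$ as $\lambda_0<\lambda_1<\cdots$, so that $\Lev(T_X^L)$ identifies with $\w$ via $\lambda_n\mapsto n$. For $x\ne y$, the intersection $B_L(x)\cap B_L(y)$ consists of those $(B_\lambda(x),\lambda)$ with $B_\lambda(x)=B_\lambda(y)$, equivalently with $\lambda\ge d(x,y)$, giving
$$\rho(B_L(x),B_L(y))=\min\{n\in\w:\lambda_n\ge d(x,y)\}.$$
This single formula yields both halves of coarse equivalence via the multi-map $\Phi=\{(x,B_L(x)):x\in X\}\subset X\times\partial T_X^L$: it bounds $\rho$ in terms of $d$, and conversely $\rho(B_L(x),B_L(y))\le n$ forces $d(x,y)\le\lambda_n$. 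Since $T_X^L$ is $\da$-bounded, every branch has a smallest element $(B_{\lambda_0}(x),\lambda_0)$, so $\Phi(X)=\partial T_X^L$; moreover $\Phi^{-1}(B_L(x))=B_{\lambda_0}(x)$ has diameter $\le\lambda_0$. Proposition~\ref{p2.1} then certifies $\Phi$---and hence the canonical map $B_L$---as a coarse equivalence.

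For (iii), each isometric bijection $f:X\to X$ lifts to the order automorphism $\tilde f(B_\lambda(z),\lambda)=(B_\lambda(f(z)),\lambda)$ of $T_X^L$, which preserves the level stratification and therefore all degree counts. Given any two elements $(B_\lambda(x),\lambda),(B_\lambda(y),\lambda)$ on a common level, choosing $f$ with $f(x)=y$ yields a $\tilde f$ sending one to the other, so $\deg_\mu^l(T_X^L)=\Deg_\mu^l(T_X^L)$ for all $\mu\le l$; the induced action on $\partial T_X^L$ is by isometries of $\rho$, establishing isometric homogeneity of the boundary. The main point requiring care---and the only place the argument is not purely formal---is the bookkeeping between the real-valued radii in $L$ and the integer-valued levels in $\w$; this rescaling is harmless precisely because $L$ is closed, discrete and unbounded in $[0,\infty)$, so $t\mapsto\min\{n:\lambda_n\ge t\}$ is a proper monotone step function that turns finite bounds on $d$ into finite bounds on $\rho$ and vice versa.
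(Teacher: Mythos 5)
Your proof is correct and essentially complete; the paper offers no proof of Lemma~\ref{c5} at all, only a citation to Propositions~4.4 and~4.5 of \cite{BZ}, so there is nothing in the text to compare against. The explicit formula $\rho(B_L(x),B_L(y))=\min\{n\in\w:\lambda_n\ge d_X(x,y)\}$ for $x\ne y$ is exactly the right engine: it gives coarseness of $B_L$ and of its inverse at a glance, and, combined with the lift $\tilde f$ of an isometry $f$, it makes the transfer of homogeneity transparent. One small inaccuracy: you deduce that every branch of $T_X^L$ has its smallest element at level~$0$ from ${\da}$-boundedness alone, but ${\da}$-boundedness does not by itself force branches to reach the bottom level (a branch could in principle begin higher up). What makes this work here is that $T_X^L$ is pruned---each $(B_{\lambda_n}(x),\lambda_n)$ with $n>0$ has the parent $(B_{\lambda_{n-1}}(x),\lambda_{n-1})$---and hence every branch meets every level, in particular level~$0$. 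With that small correction the argument is airtight.
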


\subsection{Tower morphisms}
A map $\varphi:S\to T$ is said to be
\begin{itemize}
\item {\em monotone} if for any $x,y\in S$ the inequality $x<y$ implies $\varphi(x)<\varphi(y)$;
\item {\em level-preserving} if there is an injective map $\varphi_{\Lev}:\Lev(S)\to\Lev(T)$ making the following diagram commutative:
$$
\begin{CD}
S@>{\varphi}>> T\\
@V{\lev_S}VV @ VV{\lev_T}V\\
\Lev(S)@>>{\varphi_{\Lev}}>\Lev(T).
\end{CD}
$$
\end{itemize}

For a monotone level-preserving map $\varphi:S\to T$ the induced map $\varphi_\Lev:\Lev(S)\to\Lev(T)$ is monotone and injective.

A monotone level-preserving map $\varphi:S\to T$ is called
\begin{itemize}
\item {\em a tower isomorphism} if it is bijective;
\item {\em a tower embedding} if it is injective;
\item {\em a tower immersion} if it is almost injective in the sense that for any points $x,x'\in S$ with $\varphi(x)=\varphi(x')$ we have $\lev_S(x\wedge x')\le\max\{\lev_S(x),\lev_S(x')\}+1$.
\end{itemize}

Each monotone map $\varphi:S\to T$ between towers induces a multi-map $\partial\varphi:\partial S\multimap\partial T$ assigning to a branch $\beta\subset S$ the set $\partial\varphi(\beta)\subset\partial T$ of all branches of $T$ that contain the linearly ordered subset $\varphi(\beta)$ of $T$. It follows that $\partial\varphi(\beta)\ne\emptyset$ and hence $(\partial\varphi)^{-1}(\partial T)=\partial S$.

We are interested in immersions of towers because the have the following their property proved in  Proposition 5.4 of \cite{BZ}.

\begin{lemma}\label{p8} Any surjective tower immersion $\varphi:S\to T$ induces a coarse equivalence $\partial \varphi:\partial S\multimap\partial T$.
\end{lemma}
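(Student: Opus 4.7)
The plan is to verify directly the defining conditions of a coarse equivalence for $\partial\varphi$: surjectivity $\partial\varphi(\partial S)=\partial T$ (the condition $(\partial\varphi)^{-1}(\partial T)=\partial S$ is already noted in the text), and that both $\partial\varphi$ and $(\partial\varphi)^{-1}$ are coarse multi-maps. Before anything else I would untangle the level indexing. Because $\varphi$ is level-preserving there is a monotone injection $\varphi_{\Lev}:\Lev(S)\to\Lev(T)$; surjectivity of $\varphi$ forces $\varphi_{\Lev}$ to be surjective as well, and since both level sets are identified with $\w$ (the towers being $\da$-bounded and $\upa$-unbounded), $\varphi_{\Lev}$ must be the identity on $\w$. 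In particular, $\varphi$ sends minimal elements to minimal elements, and each branch $a={\upa}a_0\in\partial S$ is mapped to a chain $\varphi(a)$ that hits every level of $T$ and starts at the minimal element $\varphi(a_0)$; such a chain is itself a branch of $T$, so the multi-map $\partial\varphi$ is in fact single-valued, $\partial\varphi(a)=\{\varphi(a)\}$. Surjectivity of $\partial\varphi$ then reduces to the observation that every $\gamma\in\partial T$, identified with its minimal element $t_0$, has some preimage $s_0\in\varphi^{-1}(t_0)$, which automatically sits at level $0$ and generates a branch of $S$ mapped to $\gamma$.

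Next I would verify that $\partial\varphi$ itself is coarse. The key geometric observation is that if $A\subset\partial S$ satisfies $\diam(A)\le n$, then for any fixed $a\in A$ with level-$n$ element $p_n\in a$, every other branch $b\in A$ also passes through $p_n$: indeed $\lev_S(a_0\wedge b_0)\le n$ and $a_0\wedge b_0\in{\upa}a_0=a$, so $a_0\wedge b_0\le p_n$, which forces $p_n\ge b_0$, i.e., $p_n\in b$. Consequently all branches in $\partial\varphi(A)$ share the common node $\varphi(p_n)$ at level $n$, giving $\diam(\partial\varphi(A))\le n$ and hence $\w_{\partial\varphi}(\e)\le\e$.

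The main obstacle --- and the only place where the immersion hypothesis genuinely enters --- is showing that $(\partial\varphi)^{-1}$ is also coarse. I would argue as follows. Given $a,a'\in\partial S$ with $\rho(\varphi(a),\varphi(a'))\le n$, let $p\in a$ and $p'\in a'$ be the elements at level $n$; both $\varphi(p)$ and $\varphi(p')$ lie at level $n$ of $T$ and belong to the common initial segment of the branches $\varphi(a)$ and $\varphi(a')$, so necessarily $\varphi(p)=\varphi(p')$. The immersion property then delivers $\lev_S(p\wedge p')\le n+1$, and since $p\wedge p'\ge a_0\wedge a'_0$, this upgrades to $\lev_S(a_0\wedge a'_0)\le n+1$, i.e., $\rho(a,a')\le n+1$. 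This yields $\w_{(\partial\varphi)^{-1}}(\e)\le\e+1$ and completes the verification.
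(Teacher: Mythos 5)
The paper does not give its own proof of this lemma; it is quoted as Proposition~5.4 of \cite{BZ}, so there is no in-text argument for me to compare against. Your direct verification is correct and follows the route one would expect: establish that $\varphi_{\Lev}$ must be the identity on $\w$ (monotone injective self-map of $\w$ that is onto because $\varphi$ is onto), deduce that $\partial\varphi$ is single-valued with $\partial\varphi(a)=\{\varphi(a)\}$, verify surjectivity by lifting the root of a branch, prove coarseness of $\partial\varphi$ by observing that branches at distance $\le n$ share their level-$n$ node and that $\varphi$ carries that node to a common level-$n$ node, and finally invoke the immersion inequality $\lev_S(p\wedge p')\le n+1$ to bound $\w_{(\partial\varphi)^{-1}}$ by $\e\mapsto\e+1$. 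All of the individual inferences check out, including the passage from $a_0\wedge a_0'\le p\wedge p'$ to $\lev_S(a_0\wedge a_0')\le\lev_S(p\wedge p')$.

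One point you should make explicit: the whole argument silently assumes that $S$ and $T$ are \emph{pruned}, i.e.\ that every branch meets level $0$. This is what lets you write $a={\upa}a_0$ with $a_0$ at level $0$, what makes $\varphi(a)$ a genuine branch of $T$ (hence $\partial\varphi$ single-valued), and what guarantees that every branch possesses a level-$n$ element. If prunedness fails, $\partial\varphi(a)$ is the set of \emph{all} branches extending the chain $\varphi(a)$ below its bottom node, and already the bound $\diam\,\partial\varphi(\{a\})$ can blow up with the starting level of $a$, so coarseness of $\partial\varphi$ would need a separate argument. The assumption is harmless here because the paper only applies the lemma to pruned towers (canonical towers $T_X^L$ and the towers in Lemma~\ref{MainLemma}), but it should be stated, or the statement of the lemma should carry the hypothesis.
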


Identity embeddings of level subtowers also induce coarse equivalences.

\begin{lemma}\label{c7} Let $T$ be a pruned tower and $L$ be a cofinal subset of \ $\Lev(T)$. The multi-map $\partial\id:\partial T^L\multimap \partial T$ induced by the identity embedding $\id:T^L\to T$ is a coarse equivalence.
\end{lemma}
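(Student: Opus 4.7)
The plan is to verify directly that $\partial\id$ satisfies the three requirements for a coarse equivalence: $(\partial\id)^{-1}(\partial T)=\partial T^L$, $\partial\id(\partial T^L)=\partial T$, and both $\partial\id$ and $(\partial\id)^{-1}$ are coarse multi-maps. The structural observation that drives everything is that the inverse is actually single-valued: each branch $\gamma\in\partial T$ contains a unique branch of $T^L$, namely $\gamma\cap T^L$. Indeed, $\gamma\cap T^L$ is a chain in $T^L$, and any branch $\beta\in\partial T^L$ with $\beta\subset\gamma$ lies in $\gamma\cap T^L$ and hence equals it by maximality.

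For surjectivity, the identity $\partial\id(\partial T^L)=\partial T$ is immediate, since $\gamma\in\partial\id(\gamma\cap T^L)$ for every $\gamma\in\partial T$. For $(\partial\id)^{-1}(\partial T)=\partial T^L$, I would take $\beta\in\partial T^L$ with least element $b_0$ at the smallest level $\lambda_0$ of $L$ and invoke the pruned and $\da$-bounded hypotheses on $T$: iteratively choose parents of $b_0$ in $T$ down to the minimal level of $T$, and then take the union of this finite downward extension with $\beta$ and with the linearly ordered upper set ${\upa}_T b_0$ to produce a branch $\gamma\in\partial T$ containing $\beta$.

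The main content is the pair of oscillation bounds. Enumerate $L=\{\lambda_0<\lambda_1<\cdots\}$, so that under the identification $\Lev(T^L)\cong\w$ via $\lambda_j\leftrightarrow j$, distances in $\partial T^L$ and $\partial T$ use different scales. If $\beta_1\ne\beta_2$ in $\partial T^L$ satisfy $\rho_{T^L}(\beta_1,\beta_2)=j$, then the meeting point $\mu=\min(\beta_1\cap\beta_2)$ lies at level $\lambda_j$ of $T$, while at the preceding $L$-level $\lambda_{j-1}$ the branches $\beta_1,\beta_2$ carry distinct elements. Any $\gamma_i\in\partial\id(\beta_i)$ contains the corresponding $\beta_i$, so $\gamma_1,\gamma_2$ both pass through $\mu$ but disagree at level $\lambda_{j-1}$ of $T$; their meeting level in $T$ therefore lies in $(\lambda_{j-1},\lambda_j]$, giving $\rho_T(\gamma_1,\gamma_2)\le\lambda_j$. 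This yields the finite oscillation bound $\w_{\partial\id}(j)\le\lambda_j$. Conversely, if $\rho_T(\gamma_1,\gamma_2)=\ell$, then $\gamma_1\cap T^L$ and $\gamma_2\cap T^L$ agree at every $L$-level at least $\ell$, so $\rho_{T^L}((\partial\id)^{-1}(\gamma_1),(\partial\id)^{-1}(\gamma_2))\le\min\{k:\lambda_k\ge\ell\}$. By cofinality of $L$ in $\Lev(T)$ this minimum exists and is finite, so $(\partial\id)^{-1}$ is coarse as well.

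The only delicate point, and where care is needed, is the bookkeeping between the two identifications of level sets with $\w$: the ultrametrics $\rho_T$ and $\rho_{T^L}$ are indexed by different enumerations, so the translation $\lambda_j\leftrightarrow j$ must be tracked explicitly when writing the oscillation inequalities. Once this is done, the proof reduces to a direct unpacking of the pruned and cofinal hypotheses.
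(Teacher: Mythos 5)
The paper states this lemma without supplying a proof, so there is nothing to compare against directly; I will just assess your argument on its own terms. Your proof is correct and is essentially the natural one. The key structural observations — that $(\partial\id)^{-1}$ is single-valued with $(\partial\id)^{-1}(\gamma)=\{\gamma\cap T^L\}$, that surjectivity of $\partial\id$ is immediate from this, and that the oscillation bounds $\w_{\partial\id}(j)\le\lambda_j$ and $\w_{(\partial\id)^{-1}}(\ell)\le\min\{k:\lambda_k\ge\ell\}$ hold — are all right, and your handling of the two different identifications of $\Lev(T)$ and $\Lev(T^L)$ with $\w$ is exactly the bookkeeping that needs care. One small point deserves to be made explicit: you assert that $\gamma\cap T^L$ is a branch of $T^L$, but your justification (``any branch $\beta\subset\gamma$ equals it by maximality'') only shows uniqueness conditional on existence. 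The clean argument is that since $T$ is pruned and $\da$-bounded, every branch $\gamma$ meets every level of $T$ exactly once; hence $\gamma\cap T^L$ meets every level of $T^L$ exactly once, and since a chain in a tower has at most one element per level, $\gamma\cap T^L$ admits no proper chain extension in $T^L$ and is therefore maximal. With that one line added, the proof is complete.
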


The following lemma was proved in \cite[5.8]{BZ}.

\begin{lemma}\label{p11} Let $S,T$ be pruned towers and
$f:\Lev(S)\to \Lev(T)$ be a  monotone map. If $\Deg_\lambda^{\lambda+1}(S)\le\deg_{f(\lambda)}^{f(\lambda+1)}(T)$ for each non-maximal level $\lambda\in\Lev(S)$, then there is a tower embedding $\varphi:S\to T$ such that $\varphi_\lev=f$.
The tower embedding $\varphi$ induces a coarse embedding $\partial\varphi:\partial S\multimap\partial T$.
\end{lemma}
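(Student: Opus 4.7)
The plan is to build the embedding $\varphi$ recursively along a fixed cofinal branch of $S$, then deduce that $\partial\varphi$ is a coarse embedding from the levelwise control given by $f$. Since $S,T$ are pruned, $\da$-bounded, and $\upa$-unbounded, identify $\Lev(S)=\Lev(T)=\w$ and pick branches $(s_n)_{n\in\w}\subset S$ and $(t_n)_{n\in\w}\subset T$ with $\lev_S(s_n)=\lev_T(t_n)=n$. The maximal chain $\alpha=\{s_n\}$ satisfies ${\upa}s\subset\alpha$ for every $s\in\alpha$, hence is cofinal: for any $x\in S$, $x\wedge s_0\in{\upa}s_0\subset\alpha$ gives $x\le s_n$ for some $n$, so $S=\bigcup_n{\da}s_n$. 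Strict monotonicity of $f:\w\to\w$ forces $f(n)\ge n$.

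Set $\varphi(s_n)=t_{f(n)}$ and extend by induction on $n$. At step $n\to n+1$ we must define $\varphi$ on ${\da}s_{n+1}\setminus({\da}s_n\cup\{s_{n+1}\})=\bigsqcup_{q\in\suc(s_{n+1})\setminus\{s_n\}}{\da}q$. By the assumed inequality,
$$|\suc_{f(n)}(t_{f(n+1)})|=\deg_{f(n)}(t_{f(n+1)})\ge\deg_{f(n)}^{f(n+1)}(T)\ge\Deg_n^{n+1}(S)\ge|\suc(s_{n+1})|,$$
so each such $q$ can be sent to a distinct $\varphi(q)\in\suc_{f(n)}(t_{f(n+1)})\setminus\{t_{f(n)}\}$. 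Within each subtree ${\da}q$, extend $\varphi$ by downward induction: given $\varphi(y)$ at level $f(k+1)$, inject $\suc(y)$ into $\suc_{f(k)}(\varphi(y))$, whose cardinality is at least $|\suc(y)|$ by the same inequality. By construction $\varphi$ is level-preserving with $\varphi_\Lev=f$ and monotone (it preserves the cover relation). Injectivity rests on the fact that distinct same-level elements of $T$ have disjoint lower cones (a common lower bound would sit in two linearly-ordered upper cones, forcing comparability): at each step the chosen $\varphi(q)$'s are distinct at level $f(n)$ and avoid $t_{f(n)}$, so the subtrees ${\da}\varphi(q)$ stay off the spine $\{t_m\}$ and are pairwise disjoint across all recursion steps.

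To verify that $\partial\varphi$ is a coarse embedding: pruned-ness of $T$ lets $\varphi(\beta)$ be completed to a branch of $T$ by filling in the missing levels, so $(\partial\varphi)^{-1}(\partial T)=\partial S$. If $\rho_S(\beta,\beta')=n$, then $\beta_m=\beta'_m$ for $m\ge n$, so $\varphi(\beta_n)$ lies at level $f(n)$ in every $\gamma\in\partial\varphi(\beta)$ and every $\gamma'\in\partial\varphi(\beta')$, forcing $\rho_T(\gamma,\gamma')\le f(n)$ and hence $\w_{\partial\varphi}(\e)\le f(\lfloor\e\rfloor)$. Conversely, if $\rho_T(\gamma,\gamma')=m$ with $\varphi(\beta)\subset\gamma$ and $\varphi(\beta')\subset\gamma'$, then $\gamma,\gamma'$ agree at every level $\ge m$, so $\varphi(\beta_n)=\varphi(\beta'_n)$ whenever $f(n)\ge m$; injectivity of $\varphi$ yields $\beta_n=\beta'_n$ for all such $n$, and the bound $f(m)\ge m$ gives $\rho_S(\beta,\beta')\le m$, whence $\w_{(\partial\varphi)^{-1}}(\e)\le\e$.

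The main obstacle is the bookkeeping of the recursion, namely simultaneously maintaining level-preservation, monotonicity, and \emph{global} injectivity, rather than any single step which is essentially straightforward given the degree inequality. The structural observation that unlocks the argument is the disjointness of lower cones of distinct same-level elements of $T$ together with the explicit choice $\varphi(q)\ne t_{f(n)}$, which together decouple the subtree assignments made in different recursion steps and reduce global injectivity to the local injectivity guaranteed by the hypothesis.
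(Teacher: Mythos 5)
Your proof is correct, and I should note at the outset that the paper under review does not actually supply its own proof of this lemma --- it cites \cite[5.8]{BZ} --- so the comparison can only be against the expected standard argument, which is indeed what you have reconstructed.

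The strategy (fix a spine $(s_n)$ in $S$, anchor it to a spine $(t_n)$ in $T$ via $\varphi(s_n)=t_{f(n)}$, then at each step $n$ hang the newly exposed trapezia ${\da}q$ for $q\in\suc(s_{n+1})\setminus\{s_n\}$ off the off-spine parents of $t_{f(n+1)}$) is the natural and, as far as one can tell, the intended one. The two structural facts you isolate --- that distinct same-level points of a tower have disjoint lower cones (because ${\upa}z$ is a chain and same-level comparable points coincide), and that choosing $\varphi(q)\ne t_{f(n)}$ keeps each new subtree off the spine so that ${\upa}t_m=\{t_{m'}:m'\ge m\}$ forces the subtrees hung at different steps to be disjoint --- are exactly the right observations, and they do close the global-injectivity gap that a naive local argument would leave open. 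The cardinality step $|\suc_{f(n)}(t_{f(n+1)})\setminus\{t_{f(n)}\}|\ge|\suc(s_{n+1})\setminus\{s_n\}|$ follows from the hypothesis $\Deg_n^{n+1}(S)\le\deg_{f(n)}^{f(n+1)}(T)$ after removing one element from each side, and the downward recursion within each trapezium uses only the same inequality at lower levels.

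The coarse-embedding verification is also sound: $(\partial\varphi)^{-1}(\partial T)=\partial S$ holds because every chain in $T$ extends to a branch; $\rho_S(\beta,\beta')=n$ forces $\gamma,\gamma'$ to agree from level $f(n)$ up, giving $\w_{\partial\varphi}(\e)\le f(\lfloor\e\rfloor)$; and in the reverse direction the injectivity of $\varphi$ together with $f(n)\ge n$ (a consequence of $f$ being a strictly monotone self-map of $\w$, which it must be for $\varphi_{\lev}=f$ to be injective) yields $\w_{(\partial\varphi)^{-1}}(\e)\le\e$. One small remark: in the degenerate case $\beta=\beta'$ you should observe that $\diam\,\partial\varphi(\beta)\le f(0)$, since branches containing $\varphi(\beta)$ all pass through $\varphi(\beta_0)$ at level $f(0)$; this is absorbed by your bound but deserves a word. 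Otherwise the argument is complete.
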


Our last lemma will play a key role in the proof of Theorem~\ref{main}.
It is an infinite version of (a much more technically difficult) Lemma~6.1 from \cite{BZ}.

\begin{lemma}\label{MainLemma} Let $T,S$ be two pruned ${\da}$-bounded ${\upa}$-unbounded towers and $f:\Lev(T)\to\Lev(S)$ be a monotone bijective map. Assume that for each level $\lambda\in \Lev(T)$ we get
  $$\w\le\Deg_\lambda^{\lambda+1}(T)\le\deg_{f(\lambda)}^{f(\lambda+1)}(S)\le \Deg_{f(\lambda)}^{f(\lambda+1)}(S)\le \deg_{\lambda+1}^{\lambda+2}(T).$$
Then there is a surjective tower immersion $\varphi:T\to H$, inducing the coarse equivalence $\partial \varphi:\partial T\Ra\partial H$.
\end{lemma}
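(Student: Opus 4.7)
\emph{Setup and reduction.} Using the bijection $f$ I identify $\Lev(T)=\Lev(S)=\w$ and write $T_n=\lev_T^{-1}(n)$, $S_n=\lev_S^{-1}(n)$; I read $H$ in the statement as $S$. Once a surjective tower immersion $\varphi\colon T\to S$ has been constructed, Lemma~\ref{p8} supplies the claimed coarse equivalence $\partial\varphi\colon\partial T\multimap\partial S$, so the entire task reduces to producing $\varphi$.

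\emph{Strategy and inductive invariant.} I build $\varphi$ by induction on $n\in\w$. The invariant maintained after step $n$ consists of a monotone, level-preserving surjection $\varphi\colon T_0\cup\cdots\cup T_n\to S_0\cup\cdots\cup S_n$ satisfying the immersion condition, together with a prepared partition
\[T_{n+1}=\bigsqcup_{\tilde y\in S_{n+1}}W_{\tilde y}\]
such that for every $\tilde y\in S_{n+1}$ and every $\tilde x\in W_{\tilde y}$ one has $\varphi(\suc(\tilde x))\subset\suc(\tilde y)$, and the collection $\{\varphi(\suc(\tilde x))\colon\tilde x\in W_{\tilde y}\}$ partitions $\suc(\tilde y)$. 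Passing from step $n$ to step $n+1$ is then automatic: one sets $\varphi(\tilde x):=\tilde y$ for every $\tilde x\in W_{\tilde y}$. Monotonicity and level-preservation follow from the inclusion above; surjectivity onto $S_{n+1}$ holds because prunedness of $S$ makes each $\suc(\tilde y)$ non-empty and hence $W_{\tilde y}\ne\emptyset$; and the immersion condition is preserved, for whenever distinct $x,x'\in T_n$ satisfy $\varphi(x)=\varphi(x')$ they map to a common $y\in\suc(\tilde y)$ where $\tilde y=y^+$, forcing the containing blocks to satisfy $\tilde x=\tilde x'$ (by disjointness of the $\varphi$-images inside $\suc(\tilde y)$), and therefore $\lev_T(x\wedge x')\le\lev_T(\tilde x)=n+1$.

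\emph{Combinatorial core.} The real work at each step is the construction of the next partition $T_{n+2}=\bigsqcup_{\tilde{\tilde y}\in S_{n+2}}W_{\tilde{\tilde y}}$ consistently with the one just used. For every $\tilde{\tilde y}\in S_{n+2}$ one must choose $W_{\tilde{\tilde y}}\subset T_{n+2}$ and organise the parents $\suc(\tilde{\tilde x})\subset T_{n+1}$ of each $\tilde{\tilde x}\in W_{\tilde{\tilde y}}$ (each parent already carrying a prescribed membership in some $W_{\tilde y}$, $\tilde y\in S_{n+1}$) so that the $\varphi$-images, collected as $\tilde{\tilde x}$ ranges over $W_{\tilde{\tilde y}}$, disjointly cover $\suc(\tilde{\tilde y})$; and the sets $W_{\tilde{\tilde y}}$ for distinct $\tilde{\tilde y}$ must partition $T_{n+2}$. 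The four-term chain is used exactly here: $\Deg_{n+1}^{n+2}(T)\le\deg_{n+1}^{n+2}(S)$ lets every $T$-block fit inside a single $S$-block, $\Deg_{n+1}^{n+2}(S)\le\deg_{n+2}^{n+3}(T)$ guarantees that each element of $T_{n+3}$ has enough parents in $T_{n+2}$ to supply and distribute coverage of any $S_{n+2}$-block, and $\w\le\Deg_{n+1}^{n+2}(T)$ provides the slack for infinite cardinal arithmetic. The base step $n=0$ is built from scratch in the same way, producing $\varphi|_{T_0}$ and the partition of $T_1$ simultaneously.

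\emph{Main obstacle.} The hardest point is to show that the choice of $W_{\tilde{\tilde y}}$'s and of the parent-distributions inside each $W_{\tilde{\tilde y}}$ can be made \emph{simultaneously} for all $\tilde{\tilde y}\in S_{n+2}$ while respecting the placements inherited from the previous stage; this is essentially an infinite-cardinal analogue of the finite Lemma~6.1 of \cite{BZ}, made considerably easier by the infinite cardinal bookkeeping permitted by the lower bound $\w\le\Deg_\lambda^{\lambda+1}(T)$. Once $\varphi$ has been assembled, Lemma~\ref{p8} finishes the proof.
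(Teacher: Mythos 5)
Your high-level strategy---build $\varphi$ level-by-level from the bottom of $T$, carrying a one-level-ahead partition $T_{n+1}=\bigsqcup_{\tilde y\in S_{n+1}}W_{\tilde y}$ as the inductive data---is a genuinely different decomposition from the paper's. The paper never proceeds bottom-up over the whole tree: it fixes a branch $\{a_k\}$ in $T$ and a branch $\{b_k\}$ in $S$, works \emph{downward} inside the trapezia ${\da}A_k$ (where $A_k\subset\suc(a_{k+1})$) to build ``admissible immersions'' $\varphi_k:{\da}A_k\to{\da}b_k$, and arranges a ``Moreover'' clause so that $\varphi_{k+1}$ extends $\varphi_k$, taking the union at the end. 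Both routes encounter the same infinite-cardinal partition problem, but the trapezium decomposition makes a crucial issue disappear that your framework does not handle.

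Here is the structural gap. At stage $n$ your invariant fixes only $\varphi$ on $T_{\le n}$ and a partition of $T_{n+1}$; the extension of $\varphi$ to $T_{n+1}$ is then forced. But to build the \emph{next} prepared partition you need, for every $\tilde{\tilde x}\in T_{n+2}$, that $\varphi(\suc(\tilde{\tilde x}))$ lies inside a single $\suc(\tilde{\tilde y})$---equivalently, that all parents of $\tilde{\tilde x}$ were placed in $W_{\tilde y}$'s whose indices $\tilde y$ share a common immediate successor in $S_{n+2}$. Nothing in your invariant at stage $n$ imposes that coherence: the $W_{\tilde y}$'s were chosen only to match $\suc(\tilde y)$ against the already-built $\varphi$ on $T_n$, with no reference to the grouping of $T_{n+1}$ into $\suc$-fibres of $T_{n+2}$. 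Concretely, two parents $\tilde x,\tilde x'$ of the same $\tilde{\tilde x}$ may perfectly legally be sent to $\tilde y,\tilde y'$ with $\tilde y^+\ne\tilde y'^+$, after which no valid $W_{\tilde{\tilde y}}$ can absorb $\tilde{\tilde x}$. To repair this you would have to thicken the invariant (e.g., maintain a coherent pair of prepared partitions of $T_{n+1}$ and $T_{n+2}$, or impose a ``$T_{n+2}$-compatibility'' condition on $W_{\tilde y}$), which is exactly what the trapezium picture gives you for free: inside ${\da}A_k$ everything hangs below the single vertex $a_{k+1}$, so the coherence is built in, and the authors only have to carry out the one combinatorial step of splitting $\suc(A_k)$ into blocks $A_{k-1}(u)$, $u\in\suc(w)$, of sizes $\deg(u)$, refining $\{\suc(a):a\in A_k\}$. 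Finally, even apart from this, your write-up explicitly declines to prove the combinatorial core (``the hardest point \dots is essentially an infinite-cardinal analogue of the finite Lemma~6.1''): the chain of degree inequalities and the hypothesis $\w\le\Deg_\lambda^{\lambda+1}(T)$ are invoked, but the actual cardinal arithmetic (e.g.\ showing $|\suc(A_k)|=\deg(w)$ and choosing the refining partition) is not carried out, so the argument is not complete as written.
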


\begin{proof} First we introduce some more notation. Since the towers $T,S$ are ${\da}$-bounded and ${\upa}$-unbounded, their level sets $\Lev(T)$ and $\Lev(S)$ are order isomorphic to $\w$ and will therefore be identified with $\w$. In this case $f:\Lev(T)\to\Lev(S)$ coincides with the identity map of $\w$.

A subset $A$ of the tower $T$ will be called a {\em trapezium} if $A={\da}P$ for some non-empty subset $P\subset\suc(v)$ of parents of some point $v\in T$, called the {\em vertex} of the trapezium $A$ and denoted by $\vx(A)$. It is easy to see that $\{\vx(A)\}\cup{\da}P$ is a subtower of $T$. The set $P$ generating the trapezium $A={\da}P$ will be called the {\em plateau} of the trapezium.

A map $\varphi:{\da}P\to S$ from a trapezium ${\da}P\subset T$ to the tower $S$ will be called an {\em admissible immersion} if
\begin{itemize}
\item $\varphi=\phi|{\da}P$ for some tower immersion $\phi:\{\vx({\da}P)\}\cup{\da}P\to S$,
\item there is a vertex $s\in S$ such that $\varphi(P)=\{s\}$ and $\varphi({\da}P)={\da}s$.
\end{itemize}

Lemma~\ref{MainLemma} will be derived from the following

\begin{claim}\label{cl1} For any $k\in\w$, trapezium ${\da}A_k\subset T$, and vertex $w\in S$ with $\lev(w)=\lev(A_k)=k$ and $|A_k|=\deg(w)$ there is an admissible immersion $\varphi:{\downarrow}A_k\to {\downarrow}w$. Moreover, if $k>0$ and for some points $v\in\suc(w)$, $a\in A_k$, and set $A_{k-1}\subset\suc(a)$ with $|A_{k-1}|=\deg(v)$ and $|\suc(a)\setminus A_{k-1}|=\deg(a)$ an admissible immersion $\psi:{\da}A_{k-1}\to{\da} v$ is given, then the admissible immersion $\varphi$ can be constructed so that $\varphi|{\da} A_{k-1}=\psi$.
\end{claim}

\begin{proof} This claim will be proved by induction on $k$. If $k=0$, then
${\da}A_k=A_k$ and the constant map $\varphi:A_k\to\{w\}\subset H$ is the required immersion.

Assume that the claim has been proved for some $k-1\in\w$. Fix a trapezium ${\da}A_k\subset S$ and a point $w\in T$ with $\lev_S(A_k)=\lev_T(w)=k$ and $\deg(w)=|A_k|$.
Observe that the set $\suc(A_k)=\bigcup_{a\in A_k}\suc(a)$ has cardinality
$$\deg(w)=|A_k|\le|\suc(A_k)|=\sum_{a\in A_k}\deg(a)\le|A_k|\cdot\Deg^k_{k-1}(T)\le$$ $$|A_k|\cdot \deg^k_{k-1}(S)\le|A_k|\cdot\deg(w)=\deg(w).$$
Observe also that $\deg(u)\le\deg(a)$ for every $u\in\suc(w)$ and $a\in A_k$.
This follows from $\deg(u)=0=\deg(a)$ if $k=1$ and
$$\deg(u)\le\Deg^{k-1}_{k-2}(S)\le \deg^{k}_{k-1}(T)\le \deg(a)$$ if $k>1$.

Consequently, we can write the set $\suc(A_k)$ as a disjoint union $\suc(A_k)=\bigcup_{u\in\suc(w)}A_{k-1}(u)$ of subsets of cardinality $|A_{k-1}(u)|=\deg(u)$ for $u\in\suc(w)$ such that the cover $\{A_{k-1}(u):u\in\suc(w)\}$ refines the cover $\{\suc(a):a\in A_k(u)\}$ of $\suc(A_k)$.

 By the inductive assumption, for each vertex $u\in\suc(w)$ we can find an admissible immersion $\varphi_{u}:{\downarrow} A_{k-1}(u)\to{\downarrow} u$. Now define the admissible immersion $\varphi:{\downarrow}A_k\to{\downarrow}w$ by letting
$$\varphi(x)=
\begin{cases}
\varphi_{u}(x)&\mbox{ if $x\in{\downarrow}A_{k-1}(u)$ for some $u\in\suc(w)$};\\
w&\mbox{ if $x\in A_k$}.
\end{cases}
$$

If for some vertices $a\in A_k$, $v\in \suc(w)$ and a set $A_{k-1}\subset \suc(a)$ of cardinality $|A_{k-1}|=\deg(v)$ and $|\suc(a)\setminus A_{k-1}|=\deg(a)$ an admissible immersion $\psi:{\da} A_{k-1}\to {\da} v$ is given, then we can choose the cover $\{A_{k-1}(u):u\in\suc(w)\}$ so that $A_{k-1}(v)=A_{k-1}$ and then take $\varphi_v=\psi$. In this case $\varphi|{\da} A_{k-1}=\psi$. This completes the proof of Claim~\ref{cl1}.
\end{proof}

Now we are able to complete the proof of Lemma~\ref{MainLemma}. In the towers $T$ and $S$ choose two branches $\{a_k\}_{k\in\w}\in\partial T$ and $\{b_k\}_{k\in\w}\in\partial S$ such that $\lev_T(a_k)=k=\lev_S(b_k)$ for all $l\in\w=\Lev(T)=\Lev(S)$.
For every $k\in\w$ choose a subset $A_k\subset\suc(a_{k+1})$ such that $a_k\in A_k$ and $|\suc(a_{k+1})\setminus A_k|=\deg(a_{k+1})$ and
$|A_k|=\deg(b_{k})$. Such a choice of $A_k$ is always possible because
$\deg(a_{k+1})\ge\deg^{k+1}_k(T)\ge \Deg^{k}_{k-1}(S)\ge \deg(b_{k})$.

Using Claim~\ref{cl1} inductively we can construct a sequence of admissible immersions $\varphi_k:{\da}A_k\to{\da}b_k$, $k\in\w$, such that $\varphi_{k+1}|{\da}A_{k}=\varphi_k$.
Finally, define a surjective tower immersion $\varphi:T\to S$ by letting $\varphi|{\da}A_k=\varphi_k$ for $k\in\w$.
By Lemma~\ref{p8}, the induced multi-map $\partial \varphi:\partial T\multimap\partial S$ is a coarse equivalence.
\end{proof}

\section{Proof of Theorem~\ref{main}}\label{s5}

Let $X,Y$ be two ultrametric spaces.

1. First assume that $\cov^\sharp(X)\le\cov^\flat(Y)$. In this case we shall prove that $X$ is coarsely equivalent to a subspace of $Y$.

By definition of the cardinal $\cov^\sharp(X)$, there is a number $\e_0\in\IR_+$ such that $$\cov^\sharp(X)=\sup_{\delta\in\IR_+}\big(\sup_{x\in X}\cov_{\e_0}^\delta(x)\big)^+.$$ Choose any unbounded strictly increasing number sequence $(\e_n)_{n=1}^\infty$ such that $\e_1>\e_0$, put $E=\{\e_n:n\in\w\}$ and consider the canonical tower $T^E_X=\{(B_{\e_n}(x),\e_n):n\in\w\}$ of the ultrametric space $X$. By Lemma~\ref{c5}, the canonical map $B_E:X\to\partial T_X^E$ is a coarse equivalence.

Observe that for every  $n\in\w$ the cardinal
$$\kappa_n=\Deg_{\e_n}^{\e_{n+1}}(T^E_X)=\sup_{x\in X}\cov_{\e_n}^{\e_{n+1}}(x)$$ is strictly smaller  than $\cov^\sharp(X)$.

Let $\delta_0=0$ and choose by induction on $n\in\w$  a real number $\delta_{n+1}>1+\delta_n$ such that
$$\min_{y\in Y}\cov_{\delta_n}^{\delta_{n+1}}(y)\ge \kappa_n.$$
The choice of $\delta_{n+1}$ is possible since $\cov^\flat(Y)\ge\cov^\sharp(X)>\kappa_n$.
Let $D=\{\delta_n:n\in\w\}$ and consider the canonical tower $T_Y^D$ of the ultrametric space $Y$.
By Lemma~\ref{c5}, the canonical map $B_D:Y\to \partial T_Y^D$ is a coarse equivalence.

 The choice of the sequence $(\delta_n)_{n\in\w}$ guarantees that for every $n\in\w$ we get
$$\Deg_{\e_n}^{\e_{n+1}}(T^E_X)=\sup_{x\in X}\cov_{\e_n}^{\e_{n+1}}(y)=\kappa_n\le \min_{y\in Y}\cov_{\delta_n}^{\delta_{n+1}}(y)=\deg_{\delta_n}^{\delta_{n+1}}(T^D_Y).$$
By Lemma~\ref{p11}, there is a tower embedding $\varphi:T^E_X\to T^D_Y$, which induces a coarse embedding $\partial\varphi:\partial T^E_X\multimap\partial T_Y^D$.

Then $(B_D)^{-1}\circ\partial\varphi\circ B_E:X\multimap Y$ is the required coarse embedding of $X$ into $Y$.
\smallskip

2. Now assuming that $\cov^\flat(X)=\cov^\sharp(X)=\cov^\flat(Y)=\cov^\sharp(Y)$, we shall prove that  the ultrametric spaces $X,Y$ are coarsely equivalent. We shall consider four cases,  depending on the value of the cardinal $\kappa=\cov^\flat(X)=\cov^\sharp(X)=\cov^\flat(Y)=\cov^\sharp(Y)$.

If $\kappa=0$, then the spaces $X,Y$ are empty and hence coarsely equivalent.

If $\kappa=1$, then the metric spaces $X,Y$ are bounded and hence coarsely equivalent (to a singleton).

If $\kappa=\w$, then the ultrametric spaces $X,Y$ are coarsely equivalent by Theorem~5 of \cite{BZ}.

It remains to consider the case of uncountable cardinal $\kappa$. By definition of the cardinals $\cov^\sharp(X)=\cov^\sharp(Y)$, there are numbers $\e_0,\delta_0\in\IR_+$ such that for every $\e,\delta\in\IR_+$ we get $\sup_{x\in X}\cov_{\e_0}^\e(x)<\cov^\sharp(X)=\kappa$ and $\sup_{y\in Y}\cov_{\delta_0}^\delta(y)<\cov^\sharp(Y)=\kappa$.

Using definition of the cardinals $\cov^\flat(X)=\kappa=\cov^\flat(Y)$, we can inductively  construct unbounded strictly increasing sequences $(\e_n)_{n=1}^\infty$ and $(\delta_n)_{n\in\w}$ such that
$$\sup_{x\in X}\cov_{\e_n}^{\e_{n+1}}(x)\le \min_{y\in Y}\cov_{\delta_n}^{\delta_{n+1}}(y)$$ and
$$\sup_{y\in Y}\cov_{\delta_n}^{\delta_{n+1}}(x)\le \min_{x\in X}\cov_{\e_{n+1}}^{\e_{n+2}}(y)$$
for every $n\in\w$.

Let $E=\{e_n:n\in\w\}$, $D=\{\delta_n:n\in\w\}$ and consider the canonical towers $T^E_X$ and $T^D_Y$ of the ultrametric spaces $X$ and $Y$, respectively. By Lemma~\ref{c5}, the canonical maps $B_E:X\to\partial T^E_X$ and $B_D:Y\to\partial T_Y^D$ are coarse equivalences.

Observe that for every $n\in\w$ we get
$$
\begin{aligned}
\Deg_{\e_n}^{\e_{n+1}}(T^E_X)&=\sup_{x\in X}\cov_{\e_n}^{\e_{n+1}}(x)
 \le\min_{y\in Y}\cov_{\delta_n}^{\delta_{n+1}}(y)=\deg_{\delta_n}^{\delta_{n+1}}(T^D_Y)\le\\
 &\le \Deg_{\delta_n}^{\delta_{n+1}}(T^D_Y)=\sup_{y\in Y}\cov_{\delta_n}^{\delta_{n+1}}(y)
 \le \\
&  \le \min_{x\in X}\cov_{\e_{n+1}}^{\e_{n+2}}(x)=\deg_{\e_{n+1}}^{\e_{n+2}}(T^E_X).
\end{aligned}
 $$
So, we can apply Lemma~\ref{MainLemma} in order to construct a surjective tower immersion $\varphi:T^E_X\to T^D_Y$, which induces a coarse equivalence $\partial \varphi:\partial T^E_X\multimap \partial T^D_Y$.

Then the composition $B_D^{-1}\circ\partial\varphi\circ B_E:X\multimap Y$
$$\xymatrix{X\ar^{B_E}[r]&\partial T_X^E\ar@{-}^{\partial\varphi}[r]&\kern-10pt\multimap\partial T_Y^D\ar@{-}^>>>{B_D^{-1}}[r]&\kern-5pt \multimap Y}$$
is the required coarse equivalence between $X$ and $Y$.\hfill$\square$\\

{\bf Acknowledgements.}
This research was supported in part by the Slovenian Research Agency grant No.~P1-0292.

\end{document}